\numberwithin{equation}{section}
\theoremstyle{plain}
\newtheorem{thm}{Theorem}[section]
\theoremstyle{plain}
\newtheorem{cor}{Corollary}[section]
\newtheorem{lem}{Lemma}[section]
\newtheorem{prop}{Proposition}[section]
\newtheorem{rmk}{\textit{Remark}}
\newcommand{\D}{\displaystyle}
\newcommand{\non}{\nonumber}
\newcommand{\Pm}{\mathbb{P}}
\newcommand{\Em}{\mathbb{E}}
\newcommand{\halmos}{\vspace{3mm} \hfill $\Box$}
\newcommand{\wo}{\mathcal{W}^{(\omega)}}
\newcommand{\zo}{\mathcal{Z}^{(\omega)}}
\newcommand{\w}{\mathcal{W}}
\newcommand{\z}{\mathcal{Z}}
\newcounter{num}
\title{Fluctuations of Omega-killed spectrally negative L\'evy processes}
\author{Bo Li}
\address{
Department of Mathematics and LPMC\\
Nankai University\\
China}
\email{libo@nankai.edu.cn}
\author{Zbigniew Palmowski}
\address{
Faculty of Pure and Applied Mathematics\\
Wroc\l aw University of Science and Technology\\
Wyb. Wyspia\'nskiego 27, 50-370 Wroc\l aw, Poland
}
\email{zbigniew.palmowski@gmail.com}
\thanks{Supported in part by the project RARE-318984, a Marie Curie IRSES Fellowship within the 7th European Community Framework Programme.
Supported by the National Natural Science Foundation of China under the grant No. 11601243
and by the National Science Centre under the grant 2013/09/B/ST1/01778}
\date{\today}
\subjclass[2010]{60G51, 60K25}
\begin{document}

\begin{abstract}
In this paper we solve the exit problems for (reflected) spectrally negative L\'evy processes,
which are exponentially killed with a killing intensity dependent on the present state of the process and analyze respective resolvents.
All identities are given in terms of new generalizations of scale functions.
For the particular cases $\omega(x)=q$ and $\omega(x)=q \mathbf{1}_{(a,b)}(x)$, we obtain results for the classical exit problems and the Laplace transforms of the occupation times in a given interval, until first passage times, respectively.
Our results can also be applied to find the bankruptcy probability in the so-called Omega model, where bankruptcy occurs at rate $\omega(x)$ when the L\'evy surplus process is at level $x<0$.
Finally, we apply the these results to obtain some exit identities for a spectrally positive self-similar Markov processes.
The main method throughout all the proofs relies on the classical fluctuation identities for L\'evy processes, the Markov property and some basic properties of a Poisson process.

\vspace{3mm}

\noindent {\sc Keywords.} L\'evy processes
$\star$ Omega model
$\star$ occupation time
$\star$ Laplace transform
$\star$ fluctuation theory
$\star$ self-similar process

\end{abstract}

\maketitle

\pagestyle{myheadings} \markboth{\sc B.\ Li --- Z.\ Palmowski} {\sc
Fluctuations of Omega-killed SNLP}

\vspace{1.8cm}

\tableofcontents

\newpage

\section{Introduction}

Exit problems for (reflected) spectrally negative L\'{e}vy
processes have been the object of several studies over the last 40
years and have been used in many applied fields, such as mathematical finance, 
risk and queuing theory, biology, physics and many others.
An overview can be found in \cite{Kyprianou2014:book:levy}.
The principal tools of analysis are based on the Wiener--Hopf factorisation,
It\^{o}'s excursion theory
and the martingale theory.
The aim of this paper is two-fold. 
Firstly, to generalize known exit identities to a $\omega$-killed version, for which previous results are special cases, 
and secondly, to derive the exit identities using an alternative method, based on the properties of non-homogeneous Poisson processes and the Markov property.

Similar exit problems with functional discounting have been previously considered, in the context of optimal stopping theory, 
for the case of diffusion processes in Beibel and Lerche \cite{Beibel} and Dayanik \cite{Dayanik} (see also references therein).
This type of state-dependent killing has also been analyzed in the context of
Feynman-Kac formula and option prices by Glau \cite{Glau}.
The results of this paper can be considered from a wider perspective when the law of the exit time, from a given set, 
is determined by the solution of some Dirichlet problems based on a
Schr\"{o}dinger-type operator with the omega potential (describing external field acting on the particle).

Let $\omega:\mathbb{R}\to\mathbb{R}_{+}$ be a locally bounded nonnegative measurable function and
$X=\{X_t, t\geq0\}$ be a spectrally negative L\'evy process. We denote
its first passage times by:
\begin{equation}\label{firstpassagetimes}
\tau_z^{-}:=\inf\{t>0: X_t<z\}\quad\text{and}\quad\tau_c^{+}:=\inf\{t>0: X_t>c\}.
\end{equation}
Throughout the paper, the law of $X$, such that $X_0=x$, is denoted by $\Pm_x$ and the corresponding expectation by $\Em_x$. We will write $\Pm$ and $\Em$ when $x=0$.
Our main interest in this paper is deriving closed formulas for the occupation times, weighted by the $\omega$ function, considred up to some exit
times. In particular, for $x\in[0,c]$ we will identify
\begin{align*}
\mathcal{A}(x,c):=&\ \Em_x\left[\exp\left(-\int_0^{\tau_{c}^{+}} \omega(X_t)\,dt\right); \tau_c^{+}<\tau_0^{-}\right],\\
\mathcal{B}(x,c):=&\ \Em_x\left[\exp\left(-\int_0^{\tau_{0}^{-}} \omega(X_t)\,dt\right);  \tau_0^{-}<\tau_c^{+}\right].
\end{align*}
Applying a limiting argument to the above will produce the one-sided $\omega$-killed exit identities for the spectrally negative L\'evy process.
Similar results will be derived for a reflected process at running minimum and maximum.
Finally, respective resolvents are also identified.

It turns out that the identities can be characterised by two families of functions:
$\{\mathcal{W}^{(\omega)}(x), x\in \mathbb{R}\}$ and $\{\mathcal{Z}^{(\omega)}(x), x\in \mathbb{R}\}$,
which we will call $\omega$-scale functions and are defined uniquely as the solutions of the following equations:
\begin{align}
\mathcal{W}^{(\omega)}(x)=&\ W(x)+ \int_0^{x} W(x-y) \omega(y) \mathcal{W}^{(\omega)}(y)\,dy, \label{eqn:Ww}\\
\mathcal{Z}^{(\omega)}(x)=&\ 1+ \int_0^{x} W(x-y) \omega(y) \mathcal{Z}^{(\omega)}(y)\,dy,\label{eqn:Zw}
\end{align}
respectively, where $W(x)$ is a classical zero scale function defined formally in \eqref{scaleW}.

In the case of a constant $\omega$ function, i.e. $\omega(x)=q$, we will show that the $\omega$-scale functions reduce to the classical scale functions
$(\mathcal{W}^{(\omega)}(x), \mathcal{Z}^{(\omega)}(x))=(W^{(q)}(x),Z^{(q)}(x))$, producing the well-known exit identities for the two-sided exit problems for spectrally negative L\'evy processes (see \cite{Kyprianou2005:Martingale:rev, Kyprianou2014:book:levy}).

Taking $$\omega(x)=p+ q \mathbf{1}_{(a,b)}(x)$$
replicates the main result of \cite{Loeffen2014:occupationtime:levy}, where the Laplace transform of the occupation 
time in a given interval, until the first passage time, is studied.
In particular, we will show that in this case:
\begin{align*}
\mathcal{W}^{(\omega)}(x)=&\ \mathcal{W}_{a}^{(p,q)}(x)- q \int_{b}^{x} W^{(p)}(x-z)\mathcal{W}_{a}^{(p,q)}(z)\,dz,\\
\mathcal{Z}^{(\omega)}(x)=&\ \mathcal{Z}_{a}^{(p,q)}(x)- q\int_{b}^{x} W^{(p)}(x-z)\mathcal{Z}_{a}^{(p,q)}(z)\,dz,
\end{align*}
where the functions $(\mathcal{W}_{a}^{(p,q)}(z), \mathcal{Z}_{a}^{(p,q)}(z))$ are defined by \eqref{wronnie} and \eqref{zronnie},
which were used in \cite{Loeffen2014:occupationtime:levy} (see also \cite{Zhou2014:occupationtime:levy}).

The types of $\omega$-scale funtions given in this paper can produce
prices of various occupation-time-related derivatives, such as step options, double knock-out corridor options
and quantile options for a spectrally negative L\'evy model (see e.g. in \cite{Cai2010:occupationtime:jumpdiffusion, Guerin2014:occupationmeasure:levy}).
In particular, we analysed in detail the case when the function $\omega$ is a step function.
In addition, we will use the main results to calculate the probability of bankruptcy for an Omega-model with the L\'evy risk process analysed e.g. in \cite{Albrecher2011:optimal:omega, Gerber2012:omega, Landriault2011:occupationtime:levy}.
Finally, the $\omega$-scale functions identified for an exponential $\omega$ function give the solutions to the two-sided exit problems of a spectrally positive
self-similar Markov process (see e.g. \cite{Chaumont, survey}).

The remainder of this paper is organised as follows. In Section \ref{mainresults} we present our main results.
Section \ref{examples} is dedicated to the analysis of some particular cases.
Finally, in Sections \ref{sec:facts}, \ref{Proofs} and  \ref{Proofs: res} we give the proofs of all results.


\section{Main results}\label{mainresults}
\subsection{Basic notations}
Let $X=\{X_t, t\geq0\}$ be a spectrally negative L\'evy process, that is a stochastic process with independent and stationary increments without positive jumps.
We hereby exclude the case that X is the negative of a subordinator. Since the L\'evy process X has no positive jumps, its Laplace transform always exists and is given by
\[
\Em\left[\exp(\theta X_t)\right]=\exp( t\psi(\theta)), \quad \forall\, \theta\geq0.
\]
The function $\psi(\theta)$, known as the Laplace exponent of $X$, is a continuous and strictly convex function on $[0,\infty)$ given by the L\'evy-Khintchine formula
\begin{equation}
	\psi(\theta)=\frac{\sigma^{2}}{2}\,\theta^{2}+ \mu \theta
	+\int_{(-\infty, 0)}\bigl(e^{\theta x}-1-\theta
	x\,\mathbf{1}_{\{x>-1\}}\bigr)\,\Pi (d x),
\label{kyprianou-palmowski:LH}
\end{equation}
where $\mu \in \mathbb{R}$, $\sigma\geq 0$ and $\Pi $ is a measure on $\mathbb{R}_-$, such that $\int (1\wedge x^{2})\,\Pi (d x)<\infty $.
Further, $\psi(\theta)$ tends to infinity as $\theta$ tends to infinity, which allows us to define the right continuous inverse
of $\psi$ denoted by $\Phi$, that is $\Phi(\theta)=\inf\{s>0: \psi(s)>\theta\}$.
The reader is referred to Bertoin \cite{Bertoin96:book} and Kyprianou \cite{Kyprianou2014:book:levy} for an introduction to the theory of L\'evy processes.

The forthcoming discussions greatly rely on the so-called scale functions which play a vital role in the fluctuation identities for spectrally negative L\'evy processes; see \cite{Bertoin96:book, Kyprianou2014:book:levy, Hubalek2011:scalefunction:examples} for more details.
For $q\geq 0$,  the scale function $W^{(q)}$ is defined as a continuous and increasing function such that $W^{(q)}(x)=0$, for all $x<0$, and satisfies
\begin{equation}\label{scaleW}
\int_0^{\infty} e^{-\theta y} W^{(q)}(y)\,dy=\frac{1}{\psi(\theta)-q}\quad\text{for $\theta>\Phi(q)$}.
\end{equation}
In the following, we denote the zero scale function $W(x):=W^{(0)}(x)$.

Using the definition of the scale function $W^{(q)}(x)$ in \eqref{scaleW}, we define the related scale function $Z^{(q)}(x)$ by
\begin{equation}\label{scaleZ}
Z^{(q)}(x):=1+ q \int_0^{x}W^{(q)}(y)\,dy, \quad x\in \mathbb{R}.
\end{equation}

Throughout this paper, $*$ denotes the convolution on the non-negative half-line:
\[
f*h(x)=\int_{0}^{x} f(x-y)h(y)\,dy.
\]
In Section \ref{sec:facts} we will prove the following basic lemma.
\begin{lem}\label{lem:equation} Let $h$ and $\omega$ be locally bounded functions on $\mathbb{R}$. Then, the equation
\begin{equation}
H(x)=h(x)+ \int_0^{x} W(x-y) \omega(y) H(y)\,dy \label{lem:eqn:1}
\end{equation}
admits a unique locally bounded solution $H(x)=H^{(\omega)}(x)$, on $\mathbb{R}$, where we take $H(x)=h(x)$ for $x<0$.
For any fixed $\delta\geq0$, $H$ satisfies \eqref{lem:eqn:1} if and only if $H$ satisfies the following equation:
\begin{equation}
H(x)=h_\delta(x)+ \int_0^{x} W^{(\delta)}(x-y) (\omega(y)-\delta) H(y)\,dy,\label{lem:eqn:2}
\end{equation}
where $\D h_\delta(x)=h(x)+\delta W^{(\delta)}*h(x)$.
Moreover, if $\omega_1(x)<\omega_2(x)$ for all $x$, then $H^{(\omega_1)}(x)<H^{(\omega_2)}(x)$.
\end{lem}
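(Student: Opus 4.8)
The plan is to regard \eqref{lem:eqn:1} as a Volterra integral equation of the second kind and to proceed in three stages: existence and uniqueness, then the equivalence with \eqref{lem:eqn:2}, and finally monotonicity. For the first stage I would fix an arbitrary $a>0$ and work on $[0,a]$, where both $W$ (continuous, increasing, vanishing on $(-\infty,0)$) and $\omega$ are bounded, say by $M$ and $K$. Introduce the operator $\mathcal{I}f(x)=\int_0^x W(x-y)\omega(y)f(y)\,dy=W*(\omega f)(x)$ and the Picard iterates $H_0=h$, $H_{n+1}=h+\mathcal{I}H_n$. Since $H_{n+1}-H_n=\mathcal{I}(H_n-H_{n-1})$, an induction gives the factorial bound $|H_{n+1}(x)-H_n(x)|\le C\,(MKx)^n/n!$ on $[0,a]$, so $H=\sum_{n\ge0}(H_{n+1}-H_n)$ converges uniformly there and yields a locally bounded solution, extended by $H=h$ on $(-\infty,0)$. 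The same estimate applied to the difference of two solutions (a Gronwall argument) forces it to vanish, giving uniqueness. I would record the resulting Neumann representation $H=\sum_{n\ge0}\mathcal{I}^n h$, as it drives the last part.

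For the equivalence, the key ingredient is the classical resolvent identity $W^{(\delta)}=W+\delta\,W*W^{(\delta)}$, which follows from \eqref{scaleW} by comparing Laplace transforms via $\frac{1}{\psi-\delta}-\frac{1}{\psi}=\frac{\delta}{\psi(\psi-\delta)}$. Read on the convolution algebra on the half-line, this says precisely that the operators $I-\delta\,W*$ and $I+\delta\,W^{(\delta)}*$ are mutual inverses, since $(I+\delta W^{(\delta)}*)(I-\delta W*)$ equals $I+\delta\bigl(W^{(\delta)}-W-\delta\,W*W^{(\delta)}\bigr)*=I$. I would then split $\omega=(\omega-\delta)+\delta$ in \eqref{lem:eqn:1} to rewrite it as $(I-\delta W*)H=h+W*\bigl((\omega-\delta)H\bigr)$, apply $I+\delta W^{(\delta)}*$ to both sides, and use $(I+\delta W^{(\delta)}*)h=h_\delta$ together with $(I+\delta W^{(\delta)}*)W=W^{(\delta)}$ to land on \eqref{lem:eqn:2}. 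Applying the inverse operator $I-\delta W*$ reverses every step, so the two equations are genuinely equivalent.

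For monotonicity I use the paper's standing assumption $\omega\ge0$ together with $h\ge0$, so that the Neumann series gives $H^{(\omega)}\ge h\ge0$. Subtracting the two defining equations and setting $\Delta=H^{(\omega_2)}-H^{(\omega_1)}$, the regrouping $\omega_2H^{(\omega_2)}-\omega_1H^{(\omega_1)}=\omega_2\Delta+(\omega_2-\omega_1)H^{(\omega_1)}$ produces $\Delta=g+W*(\omega_2\Delta)$ with source $g=W*\bigl((\omega_2-\omega_1)H^{(\omega_1)}\bigr)\ge0$. Thus $\Delta$ solves an equation of the same form with nonnegative data, and its Neumann series has only nonnegative terms, whence $\Delta\ge g\ge0$. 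Strict positivity of $g$ on $(0,\infty)$ — from $W>0$ on $(0,\infty)$, $\omega_2-\omega_1>0$, and $H^{(\omega_1)}\ge h\ge0$ with $H^{(\omega_1)}>0$ on $(0,\infty)$ — upgrades this to the strict inequality there, while for $x\le0$ both sides reduce to $h(x)$.

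The step I expect to be the main obstacle is the equivalence: one must handle the half-line convolution carefully, verify the resolvent identity in the correct direction, and confirm that $I-\delta W*$ and $I+\delta W^{(\delta)}*$ truly invert one another before transporting the identity through the nonconstant factor $\omega-\delta$.
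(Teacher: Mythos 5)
Your proposal is correct, and its skeleton coincides with the paper's: successive approximation for the Volterra equation, the resolvent identity $W^{(\delta)}-W=\delta W^{(\delta)}*W$ (identity \eqref{falka}) for the equivalence of \eqref{lem:eqn:1} and \eqref{lem:eqn:2}, and positivity of the kernel for monotonicity. The differences are in execution, and two of them actually buy you something. For existence and uniqueness the paper does not use factorial bounds: it tilts by $e^{-s_0x}$ with $\psi(s_0)>2\varpi$, so that the tilted kernel has total mass $\varpi/\psi(s_0)<\tfrac12$ and the iteration \eqref{reccH} is a strict contraction (a renewal-theorem device); your untilted Picard scheme with the $(MKx)^n/n!$ estimate is more elementary, as it never invokes the Laplace exponent. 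For the equivalence, the paper convolves \eqref{lem:eqn:1} with $\delta W^{(\delta)}$ and, strictly speaking, writes out only the direction \eqref{lem:eqn:1}$\Rightarrow$\eqref{lem:eqn:2}; your packaging of \eqref{falka} as the statement that $I-\delta W*$ and $I+\delta W^{(\delta)}*$ are mutually inverse operators delivers both directions of the ``if and only if'' at once, which is a cleaner account of what the lemma claims. For monotonicity the paper says in one line that it follows from the iterative construction \eqref{reccH}; your explicit difference argument ($\Delta=g+W*(\omega_2\Delta)$ with $g\geq0$, then nonnegativity of the Neumann series) is the same mechanism spelled out, and you are right to flag that it needs $\omega\geq0$, $h\geq0$, and $H^{(\omega_1)}$ positive on $(0,\infty)$: the strict inequality as literally stated fails for $x<0$ (both sides equal $h(x)$) and for $h\equiv0$, so the positivity hypotheses you impose -- satisfied by every $h$ the paper uses, namely $W$, $1$, $Z^{(\delta)}$, $e^{\Phi(\phi)x}$ -- are genuinely needed, a point the paper glosses over.
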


From now on we will assume that $\omega$ is a non-negative and locally bounded function on $\mathbb{R}$, 
unless we explicitly assume (in addition) that $\omega$ is bounded.

Recall that, given $\omega$ on $\mathbb{R}$, the $\omega$-scale functions $\mathcal{W}^{(\omega)}(x)$ and $\mathcal{Z}^{(\omega)}(x)$
are given as the unique locally bounded solutions on $\mathbb{R}$ of equations
(\ref{eqn:Ww}) and (\ref{eqn:Zw}), respectively.
Note that, by Lemma \ref{lem:equation}, both functions are well-defined.

\subsection{Two-sided exit problems and one-sided downward problem}
Our first main result can be stated as follows.
\begin{thm}\label{thm:main} For $x\leq c$ and the first passage times $\tau_0^-$ and $\tau_c^+$ defined in \eqref{firstpassagetimes}, we have:
\begin{align}
\mathcal{A}(x,c)=&\ \Em_x\left[\exp\left(-\int_0^{\tau_{c}^{+}} \omega(X_t)\,dt\right); \tau_c^{+}<\tau_0^{-}\right]= \frac{\mathcal{W}^{(\omega)}(x)}{\mathcal{W}^{(\omega)}(c)},\label{ans:A}\\
\mathcal{B}(x,c)=&\ \Em_x\left[\exp\left(-\int_0^{\tau_{0}^{-}} \omega(X_t)\,dt\right);  \tau_0^{-}<\tau_c^{+}\right]= \mathcal{Z}^{(\omega)}(x)- \frac{\mathcal{W}^{(\omega)}(x)}{\mathcal{W}^{(\omega)}(c)} \mathcal{Z}^{(\omega)}(c).\label{ans:B}
\end{align}
\end{thm}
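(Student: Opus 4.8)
The plan is to feed the state-dependent discount into a Feynman--Kac multiplicative-functional martingale and read off both identities by optional stopping at the two-sided exit time $\tau:=\tau_0^-\wedge\tau_c^+$. Before doing so I would reduce to bounded $\omega$: setting $\omega_n=\omega\wedge n$, the monotonicity clause of Lemma~\ref{lem:equation} gives $\wo$ (and its $\zo$-analogue) as increasing limits of the $\omega_n$-versions, while $\exp(-\int_0^{\tau}\omega_n(X_t)\,dt)\downarrow\exp(-\int_0^{\tau}\omega(X_t)\,dt)$; monotone convergence then transfers the statement from bounded $\omega$ (where $\wo,\zo$ and all expectations are bounded on $[0,c]$, so nothing about integrability is delicate) to the general locally bounded case.

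The heart of the argument is to show that, because of the defining equations \eqref{eqn:Ww}--\eqref{eqn:Zw}, the processes $M_t=\exp(-\int_0^t\omega(X_s)\,ds)\,\wo(X_t)$ and $N_t=\exp(-\int_0^t\omega(X_s)\,ds)\,\zo(X_t)$ are local martingales up to $\tau$. By the Feynman--Kac correction, $M$ (resp.\ $N$) is a local martingale precisely when $(\mathcal{L}-\omega)\wo=0$ (resp.\ $(\mathcal{L}-\omega)\zo=0$) on $(0,c)$, where $\mathcal{L}$ is the generator of $X$. I would obtain these from the renewal equations by using that the zero scale function $W$ is the Green-type kernel of $\mathcal{L}$ subject to the lower boundary, so that $\mathcal{L}(W*g)=g$ on $(0,\infty)$ for locally bounded $g$. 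Applying this with $g=\omega\wo$ and using $\mathcal{L}W=0$ on $(0,\infty)$, equation \eqref{eqn:Ww} gives $\mathcal{L}\wo=\mathcal{L}W+\mathcal{L}\bigl(W*(\omega\wo)\bigr)=\omega\wo$, whence $(\mathcal{L}-\omega)\wo=0$; the same computation applied to \eqref{eqn:Zw}, together with $\mathcal{L}\,1=0$, gives $(\mathcal{L}-\omega)\zo=0$.

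Granting the martingale property, optional stopping at $\tau$ becomes explicit thanks to two features of spectrally negative processes. Upward passage is continuous, so $X_{\tau_c^+}=c$ and hence $\wo(X_{\tau_c^+})=\wo(c)$, $\zo(X_{\tau_c^+})=\zo(c)$. On $\{\tau_0^-<\tau_c^+\}$ the process either jumps strictly below $0$ or creeps to $0$, and in either case $\wo(X_{\tau_0^-})=0$ (since $W\equiv0$ on $(-\infty,0)$, and creeping to $0$ forces $\sigma>0$, where $W(0)=0$), while $\zo(X_{\tau_0^-})=1$ because $\zo\equiv1$ on $(-\infty,0]$ by Lemma~\ref{lem:equation} with $h\equiv1$. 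Thus $\wo(x)=\Em_x[M_0]=\Em_x[M_\tau]=\wo(c)\,\mathcal{A}(x,c)$, which is \eqref{ans:A}; and $\zo(x)=\Em_x[N_0]=\Em_x[N_\tau]=\zo(c)\,\mathcal{A}(x,c)+\mathcal{B}(x,c)$, so substituting \eqref{ans:A} yields \eqref{ans:B}.

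The main obstacle is the martingale step, i.e.\ upgrading the renewal equations \eqref{eqn:Ww}--\eqref{eqn:Zw} to the generator identities $(\mathcal{L}-\omega)\wo=0=(\mathcal{L}-\omega)\zo$ rigorously, given the limited smoothness of scale functions and the integro-differential form of $\mathcal{L}$. To stay closer to the elementary toolkit advertised in the abstract (and to sidestep $\mathcal{L}$ entirely), I would instead realise the discount as independent Poissonian killing at the state-dependent rate $\omega$ via thinning, so that $\mathcal{A}(x,c)=\Pm_x(\tau_c^+<\tau_0^-\wedge T)$ for the first killing mark $T$, and derive the renewal equation satisfied by $\mathcal{A}(\cdot,c)$ by conditioning on the first mark through the strong Markov property together with the classical two-sided exit and resolvent identities; uniqueness in Lemma~\ref{lem:equation} would then force $\mathcal{A}(x,c)=\wo(x)/\wo(c)$ after fixing the constant by the boundary value $\mathcal{A}(c,c)=1$, with $\mathcal{B}$ handled analogously.
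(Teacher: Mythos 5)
Your operative proof --- the Poissonian-thinning fallback you end with --- is essentially the paper's own argument, so the proposal is correct; but it is worth spelling out how the two organisations differ. The paper also realises the discount by thinning a marked Poisson process of intensity $\lambda$ (an upper bound of $\omega$), conditions on the first mark via the classical identities of Proposition \ref{prop:levy}, and obtains a renewal equation with kernel $W^{(\lambda)}(x-y)(\lambda-\omega(y))$; note that to match this against \eqref{eqn:Ww} you need the $\delta$-equivalence \eqref{lem:eqn:2} together with \eqref{falka}, both of which are indeed part of Lemma \ref{lem:equation}, so your appeal to ``uniqueness in Lemma \ref{lem:equation}'' is legitimate. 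Where you diverge: you pin down the multiplicative constant by linearity of the solution map plus the boundary value $\mathcal{A}(c,c)=1$ (valid because $0$ is regular for $(0,\infty)$ once the negative-of-a-subordinator case is excluded), whereas the paper instead exploits the multiplicative Markov relation $\mathcal{A}(x,z)=\mathcal{A}(x,y)\mathcal{A}(y,z)$ to rearrange the equation and \emph{define} $\mathcal{W}^{(\omega)}$ by \eqref{defwnowa}, verifying afterwards that it solves \eqref{eqn:Ww}. For \eqref{ans:B} the paper does not repeat the thinning argument: it applies the Markov property at $\tau_c^{+}$ to reduce to the one-sided quantity $B(x)=\Em_x[\exp(-\int_0^{\tau_0^-}\omega(X_t)\,dt)]$ and then uses the resolvent identity of Lemma \ref{lem:key} with $R^{(0)}$; your plan of handling $\mathcal{B}$ ``analogously'' by direct first-mark conditioning with $U^{(\lambda)}$, $Z^{(\lambda)}$ and the boundary value $\mathcal{B}(c,c)=0$ also works and is arguably more uniform. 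Finally, your headline Feynman--Kac route is exactly what the paper deliberately avoids: the generator identities $(\mathcal{L}-\omega)\mathcal{W}^{(\omega)}=0=(\mathcal{L}-\omega)\mathcal{Z}^{(\omega)}$ cannot be asserted literally without smoothness of $W$ (and a domain argument for the integro-differential operator $\mathcal{L}$), so as written that part is only a heuristic --- you were right to flag it as the obstacle and to make the thinning argument, not the martingale one, carry the proof.
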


It is clear from Theorem \ref{thm:main} that $\mathcal{W}^{(\omega)}(c)$ and $\frac{\mathcal{Z}^{(\omega)}(c)}{\mathcal{W}^{(\omega)}(c)}$ are monotonic functions of $c$.
Taking the limit $c\to+\infty$, in Theorem \ref{thm:main}, produces the following corollary.
\begin{cor}\label{cor:one:down}
Let $\D c_{\mathcal{W}^{-1}(\infty)}=\lim_{c\to+\infty} \mathcal{W}^{(\omega)}(c)^{-1}$ and $\D c_{\mathcal{Z}/\mathcal{W}(\infty)}=\lim_{c\to+\infty} \frac{\mathcal{Z}^{(\omega)}(c)}{\mathcal{W}^{(\omega)}(c)}$. Then for all $x\geq 0$:
\begin{align*}
\Em_x\left[\exp\left(-\int_0^{\infty} \omega(X_t)\,dt\right); \tau_0^{-}=\infty\right]=&\  c_{\mathcal{W}^{-1}(\infty)} \mathcal{W}^{(\omega)}(x),\\
\Em_x\left[\exp\left(-\int_0^{\tau_0^{-}} \omega(X_t)\,dt\right); \tau_0^{-}<\infty\right]=&\  \mathcal{Z}^{(\omega)}(x)- c_{\mathcal{Z}/\mathcal{W}(\infty)}\mathcal{W}^{(\omega)}(x).
\end{align*}
Moreover, $c_{\mathcal{W}^{-1}(\infty)}>0$ if and only if $\D\int_0^{\infty} \omega(y)\,dy<\infty$ and $X_t\to\infty$ a.s.
\end{cor}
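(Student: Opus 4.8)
The plan is to obtain the corollary by letting $c\to+\infty$ in Theorem~\ref{thm:main}, matching the two sides of each identity in the limit, and then reading off the positivity criterion directly from the integral equation \eqref{eqn:Ww}. First I would record the existence of the two constants. By the monotonicity noted right after Theorem~\ref{thm:main}, $\mathcal{W}^{(\omega)}(c)$ is non-decreasing in $c$ and $\mathcal{Z}^{(\omega)}(c)/\mathcal{W}^{(\omega)}(c)$ is non-increasing; since both are non-negative, the limits $c_{\mathcal{W}^{-1}(\infty)}$ and $c_{\mathcal{Z}/\mathcal{W}(\infty)}$ exist in $[0,\infty)$. Consequently the right-hand sides of \eqref{ans:A} and \eqref{ans:B} converge, as $c\to\infty$, to $c_{\mathcal{W}^{-1}(\infty)}\mathcal{W}^{(\omega)}(x)$ and to $\mathcal{Z}^{(\omega)}(x)-c_{\mathcal{Z}/\mathcal{W}(\infty)}\mathcal{W}^{(\omega)}(x)$, respectively.

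Second, I would pass to the limit on the probabilistic sides by dominated convergence, the integrands being bounded by $1$. The key pathwise facts are: for any c\`adl\`ag path $\tau_c^+\uparrow\infty$ as $c\to\infty$ (because $\sup_{t\le T}X_t<\infty$ for each fixed $T$); and, almost surely, on $\{\tau_0^-=\infty\}$ one has $X_t\to\infty$, so $\tau_c^+<\infty$ for every $c$. Hence $\mathbf{1}_{\{\tau_c^+<\tau_0^-\}}\to\mathbf{1}_{\{\tau_0^-=\infty\}}$ (it tends to $0$ on $\{\tau_0^-<\infty\}$, since eventually $\tau_c^+>\tau_0^-$, and to $1$ on $\{\tau_0^-=\infty\}$), while $\int_0^{\tau_c^+}\omega(X_t)\,dt\uparrow\int_0^{\infty}\omega(X_t)\,dt$ by monotone convergence. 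Thus $\mathcal{A}(x,c)\to\Em_x[\exp(-\int_0^\infty\omega(X_t)\,dt);\tau_0^-=\infty]$. Symmetrically $\mathbf{1}_{\{\tau_0^-<\tau_c^+\}}\to\mathbf{1}_{\{\tau_0^-<\infty\}}$ and the integrand for $\mathcal{B}$ converges, giving $\mathcal{B}(x,c)\to\Em_x[\exp(-\int_0^{\tau_0^-}\omega(X_t)\,dt);\tau_0^-<\infty]$. Equating these with the analytic limits of the first step yields the two displayed identities.

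Third, for the positivity criterion I would argue analytically from \eqref{eqn:Ww}, which is cleaner than a pathwise argument. Since $W(\infty)=1/\psi'(0+)$, one has $W(\infty)<\infty\iff\psi'(0+)>0\iff X_t\to\infty$ a.s.; and $\mathcal{W}^{(\omega)}(x)\ge W(x)$, so if $X_t\not\to\infty$ then $\mathcal{W}^{(\omega)}(c)\ge W(c)\to\infty$ and $c_{\mathcal{W}^{-1}(\infty)}=0$. Assuming $X_t\to\infty$, write $M:=\mathcal{W}^{(\omega)}$. Bounding $W(c-y)\le W(\infty)$ in \eqref{eqn:Ww} gives $M(c)\le W(\infty)(1+\int_0^c\omega(y) M(y)\,dy)$, and Gr\"onwall's inequality yields $M(c)\le W(\infty)\exp(W(\infty)\int_0^c\omega(y)\,dy)$; hence $\int_0^\infty\omega<\infty$ forces $\sup_c M(c)<\infty$, i.e. $c_{\mathcal{W}^{-1}(\infty)}>0$. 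Conversely, restricting the integral in \eqref{eqn:Ww} to $y\in[1,c/2]$ and using $W(c-y)\ge W(c/2)\to W(\infty)>0$ together with $M(y)\ge W(y)\ge W(1)>0$ gives $M(c)\ge\tfrac12 W(\infty)W(1)\int_1^{c/2}\omega(y)\,dy$ for all large $c$; thus $\int_0^\infty\omega=\infty$ forces $M(c)\to\infty$ and $c_{\mathcal{W}^{-1}(\infty)}=0$. Combining the two regimes gives the stated equivalence.

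I expect the main obstacle to be the second step, namely the rigorous identification of the almost-sure limits of the indicators, and in particular the dichotomy that on $\{\tau_0^-=\infty\}$ the process is transient to $+\infty$ so that $\tau_c^+<\infty$ there for every $c$. In the oscillating and negative-drift regimes $\{\tau_0^-=\infty\}$ is a null set, so the claim is vacuous, but this case split must be made explicit. Everything else, namely the existence of the limiting constants and the positivity criterion, reduces to monotonicity, Gr\"onwall's inequality, and an elementary lower bound on the integral equation \eqref{eqn:Ww}.
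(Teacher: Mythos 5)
Your proposal is correct and follows essentially the same route as the paper: the two identities come from letting $c\to\infty$ in Theorem \ref{thm:main} (a step the paper treats as immediate and you spell out via monotone/dominated convergence), and the positivity criterion is proved exactly as in the paper's argument --- Gr\"onwall's inequality applied to \eqref{eqn:Ww} for sufficiency, and lower bounds from \eqref{eqn:Ww} together with $W(\infty)<\infty \iff X_t\to\infty$ a.s.\ for necessity. The only cosmetic difference is that you run the necessity direction contrapositively and purely analytically (using $\mathcal{W}^{(\omega)}\geq W$ and the restricted-integral bound), whereas the paper first deduces $W(\infty)<\infty$ from the probabilistic fact $\Pm_x(\tau_0^-=\infty)\geq c_{\mathcal{W}^{-1}(\infty)}\mathcal{W}^{(\omega)}(x)>0$ and then extracts $\int_0^\infty\omega(y)\,dy<\infty$ from the integral equation; both arguments rest on the same monotonicity facts.
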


\begin{rmk}\rm D\"{o}ring and Kyprianou \cite{Doring2015:integral} derive similar conditions for the finiteness of perpetual integral for L\'evy processes.
\end{rmk}

Let us introduce the more general scale functions $\mathcal{W}^{(\omega)}(x,y)$ and  $\mathcal{Z}^{(\omega)}(x,y)$ on $\mathbb{R}\times\mathbb{R}$, defined as the solutions to the
following equations:
\begin{align}
\mathcal{W}^{(\omega)}(x,y)=&\ W(x-y) + \int_{y}^{x} W(x-z) \omega(z) \mathcal{W}^{(\omega)}(z,y)\,dz, \label{eqn:Ww:xy}\\
\mathcal{Z}^{(\omega)}(x,y)=&\ 1 + \int_{y}^{x} W(x-z) \omega(z) \mathcal{Z}^{(\omega)}(z,y)\,dz, \label{eqn:Zw:xy}
\end{align}
respectively.
\begin{rmk}\label{przesuniecie}\rm Note that, given $y_{0}\in\mathbb{R}$, $\mathcal{W}^{(\omega)}(\cdot+ y_{0},y_{0})$ and $\mathcal{Z}^{(\omega)}(\cdot+ y_{0},y_{0})$ are the scale functions with respect to the shifted omega function $\omega(\cdot+y_{0})$ in the sense of \eqref{eqn:Ww} and \eqref{eqn:Zw}.
\end{rmk}
Applying Theorem \ref{thm:main},
and shifting arguments generalises, the above result to exit identities for any interval $[y,z]$.

\begin{cor}\label{cor:2} For $z\leq x\leq y$,
\begin{align*}
\Em_{x} \left[\exp\left(-\int_0^{\tau^{+}_{y}} \omega(X_s)\,ds\right); \tau_{y}^{+}<\tau_{z}^{-}\right]=&\ \frac{\mathcal{W}^{(\omega)}(x,z)} {\mathcal{W}^{(\omega)}(y, z)},\\
\Em_{x} \left[\exp\left(-\int_0^{\tau^{-}_{z}} \omega(X_s)\,ds\right); \tau_{z}^{-}<\tau_{y}^{+}\right]=&\ \mathcal{Z}^{(\omega)}(x,z)-  \frac{\mathcal{W}^{(\omega)}(x,z)} {\mathcal{W}^{(\omega)}(y,z)} \mathcal{Z}^{(\omega)}(y,z).
\end{align*}
\end{cor}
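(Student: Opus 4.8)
The plan is to deduce Corollary~\ref{cor:2} from Theorem~\ref{thm:main} by exploiting the spatial homogeneity of $X$, shifting the whole problem down by the lower level $z$.

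First I would fix $z$ and introduce the shifted process $\tilde X_t:=X_t-z$. Since $X$ has stationary and independent increments, under $\Pm_x$ the process $\tilde X$ is again a spectrally negative L\'evy process with the same Laplace exponent $\psi$, started from $\tilde X_0=x-z$; equivalently, the law of $\tilde X$ under $\Pm_x$ coincides with the law of $X$ under $\Pm_{x-z}$. Next I would track how each ingredient transforms under this shift. The upward passage time rewrites as $\tau_y^{+}=\inf\{t>0:\tilde X_t>y-z\}$ and the downward one as $\tau_z^{-}=\inf\{t>0:\tilde X_t<0\}$, so in the $\tilde X$ picture they are exactly the exit times from the interval $[0,y-z]$. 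The discount exponent becomes $\int_0^{t}\omega(X_s)\,ds=\int_0^{t}\omega(\tilde X_s+z)\,ds=\int_0^{t}\tilde\omega(\tilde X_s)\,ds$, with $\tilde\omega(\cdot):=\omega(\cdot+z)$. Because $z\le x\le y$, the starting point satisfies $x-z\in[0,y-z]$, so all hypotheses of Theorem~\ref{thm:main} are met.

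I would then apply Theorem~\ref{thm:main} to $\tilde X$, with killing function $\tilde\omega$, upper level $c=y-z$ and initial position $x-z$. This expresses the two expectations in Corollary~\ref{cor:2} as $\mathcal W^{(\tilde\omega)}(x-z)/\mathcal W^{(\tilde\omega)}(y-z)$ and $\mathcal Z^{(\tilde\omega)}(x-z)-\bigl(\mathcal W^{(\tilde\omega)}(x-z)/\mathcal W^{(\tilde\omega)}(y-z)\bigr)\mathcal Z^{(\tilde\omega)}(y-z)$, where $\mathcal W^{(\tilde\omega)}$ and $\mathcal Z^{(\tilde\omega)}$ are the one-argument $\omega$-scale functions of \eqref{eqn:Ww}--\eqref{eqn:Zw} built from $\tilde\omega$. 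The final step is to identify these with the two-argument scale functions: by Remark~\ref{przesuniecie} (taken with $y_0=z$) one has $\mathcal W^{(\tilde\omega)}(u)=\mathcal W^{(\omega)}(u+z,z)$ and $\mathcal Z^{(\tilde\omega)}(u)=\mathcal Z^{(\omega)}(u+z,z)$ for every $u$. Substituting $u=x-z$ and $u=y-z$ gives $\mathcal W^{(\tilde\omega)}(x-z)=\mathcal W^{(\omega)}(x,z)$ together with the analogous equalities, and plugging these back into the output of Theorem~\ref{thm:main} yields precisely the two claimed identities.

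The only real content, and the step I would treat most carefully, is the scale-function identification furnished by Remark~\ref{przesuniecie}: one must verify that substituting $y_0=z$ and changing variables in the convolution integral of \eqref{eqn:Ww:xy} turns it into the defining equation \eqref{eqn:Ww} for $\tilde\omega$, and likewise for $\mathcal Z$. Since $W$ depends only on the difference of its arguments, this is a transparent translation of the integral equation; once it is in place, everything else---the process, the passage times, and the discount---transforms by pure spatial homogeneity, and the corollary follows by direct substitution.
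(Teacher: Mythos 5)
Your proposal is correct and follows exactly the paper's route: the paper proves Corollary~\ref{cor:2} by ``applying Theorem~\ref{thm:main} and shifting arguments,'' with Remark~\ref{przesuniecie} supplying the identification $\mathcal{W}^{(\omega)}(\cdot+z,z)=\mathcal{W}^{(\omega(\cdot+z))}(\cdot)$ (and likewise for $\mathcal{Z}$), which is precisely your argument. You have simply spelled out the details of spatial homogeneity, the transformation of the passage times and the discount functional, and the change of variables in \eqref{eqn:Ww:xy} that the paper leaves implicit.
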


\begin{rmk}\label{rmk:1}\rm
For $\delta\geq0$ we have:
\begin{equation}\label{falka}
W^{(\delta)}-W= \delta W^{(\delta)}* W \quad\text{and}\quad Z^{(\delta)}-Z= \delta W^{(\delta)}*Z.
\end{equation}
These identities can be checked by taking the Laplace transforms of both sides.
Applying Lemma \ref{lem:equation} and (\ref{falka}) one can verify that
$\mathcal{W}^{(\omega)}(x)$, $\mathcal{Z}^{(\omega)}(x)$, $\mathcal{W}^{(\omega)}(x,y)$ and $\mathcal{Z}^{(\omega)}(x,y)$ satisfy the following equations:
\begin{align}
\mathcal{W}^{(\omega)}(x)=&\ W^{(\delta)}(x)+ \int_0^{x} W^{(\delta)}(x-y) (\omega(y)-\delta) \mathcal{W}^{(\omega)}(y)\,dy,\label{eqn:Ww:2}\\
\mathcal{Z}^{(\omega)}(x)=&\ Z^{(\delta)}(x)+ \int_0^{x} W^{(\delta)}(x-y) (\omega(y)-\delta) \mathcal{Z}^{(\omega)}(y)\,dy,\label{eqn:Zw:2}\\
\mathcal{W}^{(\omega)}(x, y)=&\ W^{(\delta)}(x- y)+ \int_{y}^{x} W^{(\delta)}(x-z) (\omega(z)-\delta) \mathcal{W}^{(\omega)}(z, y)\,dz,\label{eqn:Ww:xy:2}\\
\mathcal{Z}^{(\omega)}(x, y)=&\ Z^{(\delta)}(x- y)+ \int_{y}^{x} W^{(\delta)}(x-z) (\omega(z)-\delta) \mathcal{W}^{(\omega)}(z, y)\,dz.\label{eqn:Zw:xy:2}
\end{align}
In particular, taking constant $\omega(x)=\delta$ gives $\mathcal{W}^{(\omega)}(x)= W^{(\delta)}(x)$, $\mathcal{Z}^{(\omega)}(x)=Z^{(\delta)}(x)$ and $\mathcal{W}^{(\omega)}(x,y)=W^{(\delta)}(x-y)$, $\mathcal{Z}^{(\omega)}(x,y)=\mathcal{Z}^{(\delta)}(x-y)$.
\end{rmk}

\begin{rmk}\label{dodatkowauwaga3}\rm
By considering the function $\widehat{\omega}(x)=\omega(-x)$ one derives the exit identities for the dual process
$\widehat{X}=-X$. The characteristics of $\widehat{X}$ will be indicated by the addition of a hat over the existing notation for the characteristics of $X$.
For example, $\widehat{\Pm}_{x}$ denotes the law of of $x-X$ under $\Pm$.
In particular,
\[
\widehat{\Em}_{y}\left[ \exp\left(-\int_{0}^{\tau_{z}^{-}} \omega(X_{t})\,dt\right); \tau_{z}^{-}\leq \tau_{x}^{+}\right]=\frac{\mathcal{W}^{(\omega)}(x, y)}{\mathcal{W}^{(\omega)}(x, z)}.
\]
\end{rmk}

\bigskip

In the next theorem we present the representation of $\omega$-type resolvents.
\begin{thm}\label{thm:resolvent}
Let $\mathcal{W}^{(\omega)}(\cdot,\cdot)$  be the solution to \eqref{eqn:Ww:xy}. For
$x,y\in[0,c]$,
\begin{align}\label{thm:resolvent:U}
U^{(\omega)}(x,dy):=&\ \int_0^{\infty} \Em_{x}\left[\exp\left(-\int_0^{t} \omega(X_s)\,ds\right); t<\tau_0^{-}\wedge\tau_c^{+}, X_t\in dy\right]\,dt\non\\
=&\ \left(\frac{\mathcal{W}^{(\omega)}(x)}{\mathcal{W}^{(\omega)}(c)} \mathcal{W}^{(\omega)}(c,y)- \mathcal{W}^{(\omega)}(x,y)\right) \,dy.
\end{align}
\end{thm}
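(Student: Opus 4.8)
The plan is to characterise the resolvent measure $U^{(\omega)}(x,dy)$ through a perturbation (Dyson-type) identity relating it to the classical, undiscounted resolvent, and then to verify that the claimed expression solves that identity, finishing with a uniqueness argument.

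First I would set $T:=\tau_0^-\wedge\tau_c^+$ and, starting from the definition \eqref{thm:resolvent:U}, apply the elementary identity $e^{-\int_0^t\omega(X_s)\,ds}=1-\int_0^t\omega(X_r)\,e^{-\int_r^t\omega(X_s)\,ds}\,dr$, which is just the fundamental theorem of calculus for $r\mapsto e^{-\int_r^t\omega}$. Substituting this into the time integral, using Tonelli to exchange the $r$- and $t$-integrations over $\{0<r<t<T\}$, and invoking the Markov property at the intermediate time $r$ — noting that on $\{r<T\}$ the exit time $T$ equals $r$ plus the exit time of the shifted process from $[0,c]$ — I would obtain
\begin{equation*}
U^{(\omega)}(x,dy)=U^{(0)}(x,dy)-\int_0^c U^{(0)}(x,dz)\,\omega(z)\,U^{(\omega)}(z,dy),
\end{equation*}
where $U^{(0)}$ denotes the resolvent of $X$ killed on exiting $[0,c]$ with no $\omega$-discounting. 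This is exactly where ``the Markov property and basic properties of the Poisson process'' enter, reading $\omega$ as the rate of an extra killing superimposed on the path. (Starting instead from $e^{-\int_r^t\omega}=e^{-\delta(t-r)}e^{-\int_r^t(\omega-\delta)}$ yields the same identity with $U^{(\delta)}$ and $\omega-\delta$ in place of $U^{(0)}$ and $\omega$, matching \eqref{eqn:Ww:2} and \eqref{eqn:Ww:xy:2}.)

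Next I would insert the classical two-sided resolvent $U^{(0)}(x,dy)=u^{(0)}(x,y)\,dy$ with $u^{(0)}(x,y)=\frac{W(x)}{W(c)}W(c-y)-W(x-y)$ (see \cite{Kyprianou2014:book:levy}) and verify that the candidate density $u^{(\omega)}(x,y)=\frac{\mathcal{W}^{(\omega)}(x)}{\mathcal{W}^{(\omega)}(c)}\mathcal{W}^{(\omega)}(c,y)-\mathcal{W}^{(\omega)}(x,y)$ solves this identity. Fixing $y$ and writing $v(x):=u^{(\omega)}(x,y)$, the verification splits the kernel into its Volterra part $-W(x-z)$ and its rank-one part $\frac{W(x)}{W(c)}W(c-z)$. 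Using the defining equations \eqref{eqn:Ww} and \eqref{eqn:Ww:xy} together with the support fact $\mathcal{W}^{(\omega)}(z,y)=0$ for $z<y$, the Volterra contribution collapses so that $v(x)-\int_0^xW(x-z)\omega(z)v(z)\,dz$ reduces to $\frac{\mathcal{W}^{(\omega)}(c,y)}{\mathcal{W}^{(\omega)}(c)}W(x)-W(x-y)$; evaluating the rank-one integral by the same two equations at $x=c$ then cancels the term $\frac{\mathcal{W}^{(\omega)}(c,y)}{\mathcal{W}^{(\omega)}(c)}W(x)$ and supplies $\frac{W(x)}{W(c)}W(c-y)$, so the two sides agree and equal $u^{(0)}(x,y)$.

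Finally I would prove uniqueness, so that solving the identity pins down $U^{(\omega)}$. The homogeneous equation reads $\psi(x)=\int_0^xW(x-z)\omega(z)\psi(z)\,dz-\frac{W(x)}{W(c)}\int_0^cW(c-z)\omega(z)\psi(z)\,dz$; setting the constant $C:=\int_0^cW(c-z)\omega(z)\psi(z)\,dz$ turns this into \eqref{lem:eqn:1} with $h=-\tfrac{C}{W(c)}W$, so Lemma \ref{lem:equation} and linearity give $\psi=-\tfrac{C}{W(c)}\mathcal{W}^{(\omega)}$; substituting back and using $\mathcal{W}^{(\omega)}(c)>0$ forces $C=0$ and hence $\psi\equiv0$. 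I expect the main obstacle to be the rigorous justification of the Dyson identity — in particular the Tonelli exchange (which uses $\Em_x T<\infty$ and local boundedness of $\omega$ to control the kernel on the compact interval) and the careful bookkeeping of the exit time under the Markov shift — whereas the subsequent algebraic verification, once the vanishing of $\mathcal{W}^{(\omega)}(\cdot,y)$ below $y$ is noted, is routine.
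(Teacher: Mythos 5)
Your proposal is correct, and its backbone coincides with the paper's proof: both rest on the perturbation identity $U^{(\omega)}f = U^{(0)}\bigl(f-\omega\, U^{(\omega)}f\bigr)$ relating the $\omega$-killed resolvent to the undiscounted one, and on the uniqueness statement of Lemma \ref{lem:equation}. The differences are in execution. First, the paper obtains the perturbation identity from Lemma \ref{lem:key}, which it proves by marking the path with an independent marked Poisson point process and killing at marks below $\omega$; you re-derive the same identity from scratch via $e^{-\int_0^t\omega(X_s)\,ds}=1-\int_0^t\omega(X_r)e^{-\int_r^t\omega(X_s)\,ds}\,dr$, Tonelli, and the Markov property at the intermediate time --- an equally valid and arguably more elementary route (the paper itself uses this differentiation trick, in combination with Lemma \ref{lem:key}, in the proofs of \eqref{ans:B} and \eqref{ans:C}). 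Second, the paper solves the resulting equation \emph{constructively}: it rewrites it as the Volterra equation \eqref{lem:eqn:1} with inhomogeneity $c_U W - W*f$ for an unknown constant $c_U$, uses linearity, Lemma \ref{lem:equation} and the identity \eqref{rmk:2} for $\mathcal{R}^{(\omega)}f(x)=\int_0^x f(y)\mathcal{W}^{(\omega)}(x,y)\,dy$ to conclude $U^{(\omega)}f = c_U\,\mathcal{W}^{(\omega)}-\mathcal{R}^{(\omega)}f$, and then fixes $c_U$ from the boundary condition $U^{(\omega)}f(c)=0$; you instead guess the answer, verify it solves the identity, and establish uniqueness by showing the homogeneous equation forces $C\,\mathcal{W}^{(\omega)}(c)=0$. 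Your uniqueness step is the same mechanism by which the paper determines its constant, so nothing essential is lost; the constructive route merely avoids needing the answer in advance and the separate homogeneous analysis. One small point of rigor: your Dyson identity is an identity between measures, while your verification and uniqueness steps are phrased for functions of $x$ (pointwise in $y$); to be airtight, integrate everything against a bounded measurable test function $f$, so the unknown $\int f(y)\,U^{(\omega)}(x,dy)$ is a bounded function of $x$ on $[0,c]$ (using $\sup_{x\in[0,c]}\Em_x[\tau_0^-\wedge\tau_c^+]<\infty$), run your homogeneous-equation argument there, and conclude the measure identity since $f$ is arbitrary --- which is exactly how the paper frames its version of the argument.
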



\subsection{Exit problems for reflected processes}
In \cite{Pistorius2004:passagetime:reflect}
the exit identities were derived for the reflected spectrally negative L\'evy process.
Using a completely different method of the proof, we generalize all of the existing identities to the $\omega$-versions.
Formally, we define
\[
Y_t=X_t-I_t,\qquad \widehat{Y}_t=\widehat{X}_t-\widehat{I}_t\stackrel{d}{=}S_t-X_t,
\]
where $\D I_t:=\inf_{0\leq s\leq t}(0\wedge X_s)$ and $S_t:=\sup_{0\leq s\leq t}(0\vee X_s)$.
We will denote the first passage times of the reflected processes by:
\begin{equation}
T_c=\inf\{t\geq 0:Y_t>c\}, \quad \widehat{T}_c=\inf\{t\geq 0:\widehat{Y}_t>c\}.
\end{equation}
To simplify notations, we assume that $W$ has a continuous first derivative on $(0,\infty)$; see \cite{chan2011:scalefunction:smooth} for discussions on their smoothness.
We further assume, in this case, that $\omega$ is continuous.
From the definition of $\omega$-scale function, given in \eqref{eqn:Ww}, it follows that, in the case $W$ has continuous first derivative, then
$\mathcal{W}^{(\omega)}$
has a continuous first derivative on $(0,\infty)$.
Let us remark that in the case when $W$ does not have continuous first derivative
and $\omega(x+)$ is finite,
the derivative of the $\omega$-scale function $\mathcal{W}^{(\omega)}$
should be understood as the right-derivative.

\begin{thm}\label{thm:refl}
For $0\leq x\leq c$ we have:
\begin{align}
\mathcal{C}(x,c):=&\ \Em_x \left[\exp\left(-\int_0^{T_c} \omega(Y_t)\,dt\right)\right]\ = \frac{\mathcal{Z}^{(\omega)}(x)}{\mathcal{Z}^{(\omega)}(c)},\label{ans:C}\\
\widehat{\mathcal{C}}(x,c):=&\ \widehat{\Em}_{x}\left[\exp\left(-\int_0^{{T}_{c}} \omega(c- {Y}_t)\,dt\right)\right]=\mathcal{Z}^{(\omega)}(c-x)- \frac{\mathcal{Z}^{(\omega)\prime}(c)}{\mathcal{W}^{(\omega)\prime}(c)} \mathcal{W}^{(\omega)}(c-x).\label{ans:C:dual}
\end{align}
\end{thm}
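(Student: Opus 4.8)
The plan is to reduce each of \eqref{ans:C} and \eqref{ans:C:dual} to the determination of a single boundary value --- the value of the functional started exactly at the reflecting level --- by combining the strong Markov property with the two-sided identities of Theorem~\ref{thm:main}. The structural fact I would exploit throughout is that, since $X$ has no positive jumps, the infimum-reflected process $Y=X-I$ creeps upward and hence reaches a level continuously, while the excursions of $X$ away from its running supremum always return to the supremum continuously; spatial shifts of $X$ are irrelevant because $\omega$ is evaluated at the reflected (Markov) process itself.

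For \eqref{ans:C}, fix $0<x\le c$ and run $Y$ until $\min(\tau_c^+,\tau_0^-)$; under $\Pm_x$, as long as $X$ stays in $[0,c]$ one has $I\equiv0$, so $Y\equiv X$ and the $\omega$-weights coincide. On $\{\tau_c^+<\tau_0^-\}$ one has $T_c=\tau_c^+$, while on $\{\tau_0^-<\tau_c^+\}$ the process $Y$ is sent back to $0$ at time $\tau_0^-$ and, by the strong Markov property, restarts afresh from $Y=0$. This yields the renewal identity
\[
\mathcal{C}(x,c)=\mathcal{A}(x,c)+\mathcal{B}(x,c)\,\mathcal{C}(0,c),\qquad 0<x\le c,
\]
with $\mathcal{A},\mathcal{B}$ given by Theorem~\ref{thm:main}, so it remains only to identify $\mathcal{C}(0,c)$. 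When $X$ has bounded variation, $W(0)>0$, hence $\mathcal{A}(0,c)=W(0)/\wo(c)$ and $1-\mathcal{B}(0,c)=W(0)\,\zo(c)/\wo(c)$; evaluating the renewal identity at $x=0$ gives $\mathcal{C}(0,c)=\mathcal{A}(0,c)/(1-\mathcal{B}(0,c))=1/\zo(c)$, and substituting back produces \eqref{ans:C}.

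For \eqref{ans:C:dual} I would pass to $u:=c-\widehat Y=c-S+X\in[0,c]$, so that $T_c=\inf\{t:\widehat Y_t>c\}=\inf\{t:u_t<0\}$ and the weighting becomes the plain $\omega(u_t)$. Below the running supremum, $u$ evolves as $X$ shifted by a constant; it reflects at $u=c$ exactly when $X$ attains a new supremum (continuously, by spectral negativity) and it exits below $0$ precisely at $T_c$. Decomposing at $\min(\tau_0^-,\tau_c^+)$ for $u$ and applying Theorem~\ref{thm:main} (equivalently Corollary~\ref{cor:2}) gives, for $0<x<c$,
\[
\widehat{\mathcal{C}}(x,c)=\mathcal{B}(c-x,c)+\mathcal{A}(c-x,c)\,\widehat{\mathcal{C}}(0,c)
=\zo(c-x)+\frac{\wo(c-x)}{\wo(c)}\bigl(\widehat{\mathcal{C}}(0,c)-\zo(c)\bigr),
\]
which once again reduces everything to the boundary value $\widehat{\mathcal{C}}(0,c)$, i.e.\ to starting $\widehat Y$ at the supremum. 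Matching this with \eqref{ans:C:dual} shows the claim is exactly equivalent to $\widehat{\mathcal{C}}(0,c)=\zo(c)-\bigl(\mathcal{Z}^{(\omega)\prime}(c)/\mathcal{W}^{(\omega)\prime}(c)\bigr)\wo(c)$, equivalently to the Neumann (reflecting) condition $\partial_x\widehat{\mathcal{C}}(x,c)\big|_{x=0}=0$.

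The main obstacle is the determination of these boundary values at the reflecting level in the remaining cases: the unbounded variation case of \eqref{ans:C}, where $W(0)=0$ and the renewal identity degenerates to a tautology at $x=0$, and the supremum-reflected value $\widehat{\mathcal{C}}(0,c)$ in \eqref{ans:C:dual}, where $0$ is a regular reflecting boundary. Here I would invoke excursion theory: the running supremum plays the role of local time, the excursions of $S-X$ away from $0$ form a Poisson point process, and by the compensation/exponential formula the first excursion to exceed depth $c$ is selected at a rate governed by the $\omega$-weighted excursion measure, while the shallow excursions contribute the accumulated $\omega$-weight. The derivative ratio $\mathcal{Z}^{(\omega)\prime}(c)/\mathcal{W}^{(\omega)\prime}(c)$ then emerges because the relevant excursion-measure quantity is the $x\downarrow0$ derivative of the two-sided functionals at the reflecting level; alternatively, one may approximate $X$ by spectrally negative processes of bounded variation, for which the elementary computation above applies, and pass to the limit using the continuity of $\wo,\zo$ and their derivatives in the defining equations \eqref{eqn:Ww}--\eqref{eqn:Zw}. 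I expect this boundary/excursion step to be the delicate part, the renewal decomposition itself being routine once Theorem~\ref{thm:main} is in hand.
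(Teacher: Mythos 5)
Your two renewal decompositions are correct, and they genuinely differ from the paper's method: for the infimum-reflected case, the identity $\mathcal{C}(x,c)=\mathcal{A}(x,c)+\mathcal{B}(x,c)\,\mathcal{C}(0,c)$ together with $W(0)>0$ does settle \eqref{ans:C} completely for processes of bounded variation. But the proposal has a genuine gap exactly where you flag it: the determination of the boundary values $\mathcal{C}(0,c)$ (unbounded variation case) and $\widehat{\mathcal{C}}(0,c)$ (all cases). These are not loose ends; the derivative ratio $\mathcal{Z}^{(\omega)\prime}(c)/\mathcal{W}^{(\omega)\prime}(c)$ --- i.e.\ everything in \eqref{ans:C:dual} beyond Theorem \ref{thm:main} --- lives in this step, and neither of your suggested fixes is carried out or even viable as stated. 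The excursion-theoretic route would require computing $\omega$-weighted excursion functionals and identifying them with derivatives of $\mathcal{W}^{(\omega)}$, $\mathcal{Z}^{(\omega)}$; with a state-dependent $\omega$ this is new work of roughly the same order as the theorem itself, not a citation. Worse, the approximation alternative is misstated: you claim that for bounded-variation processes ``the elementary computation above applies'', but that computation only determines $\mathcal{C}(0,c)$. For the dual quantity, your renewal identity at $x=0$ reads $\widehat{\mathcal{C}}(0,c)=\mathcal{B}(c,c)+\mathcal{A}(c,c)\,\widehat{\mathcal{C}}(0,c)$, which is a tautology for \emph{every} spectrally negative process, since $\mathcal{A}(c,c)=1$ and $\mathcal{B}(c,c)=0$: the degenerate point here is the upper barrier $c$, and $W(0)>0$ does not help. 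So for \eqref{ans:C:dual} you have no base case at all, and for \eqref{ans:C} in the unbounded-variation case you would additionally need convergence of $\mathcal{W}^{(\omega)}$, $\mathcal{Z}^{(\omega)}$ and of the reflected exit functionals along a bounded-variation approximation, which is itself a nontrivial technical program. Recasting the claim as the Neumann condition $\partial_x\widehat{\mathcal{C}}(x,c)\big|_{x=0}=0$ is a correct reformulation, not a proof; that condition is precisely what has to be established.

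For contrast, the paper circumvents boundary analysis altogether: it applies a general resolvent identity (Lemma \ref{lem:key}, proved via a marked Poisson process) to the reflected processes, feeds in the known $q=0$ resolvents of $Y$ and $\widehat{Y}$ from Proposition \ref{prop:refl} --- which already encode the behaviour at the reflecting barrier, including the atom of $W^{(q)}(dy)$ at zero --- and then pins down the answer through the uniqueness statement of Lemma \ref{lem:equation}; the ratio $\mathcal{Z}^{(\omega)\prime}(c)/\mathcal{W}^{(\omega)\prime}(c)$ falls out algebraically from the self-consistency equation for the constant $c_{\widehat{C}}$ via the derivative identities \eqref{eqn:Wf:dif}--\eqref{eqn:Zw:dif}. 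If you wish to keep your decomposition, the most economical repair is to use Proposition \ref{prop:refl} together with Lemma \ref{lem:key} precisely to compute the two boundary values --- but that is where the real work of the theorem lies, and your proposal currently leaves it undone.
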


Similar result concerning $\omega$-type resolvents could be also derived.
\begin{thm}\label{thm:resolv:refl}
The measure
\[
L^{(\omega)}(x, dy ):=\int_0^{\infty} \Em_{x}\left[\exp\left(-\int_0^{t} \omega(Y_s)\,ds\right); t<T_c, Y_{t}\in dy\right]\,dt
\]
is absolutely continuous with respect to the Lebesgue measure and a version of its density is given by:
\begin{align}
l^{(\omega)}(x,y)=&\ \frac{\mathcal{Z}^{(\omega)}(x)}{\mathcal{Z}^{(\omega)}(c)} \mathcal{W}^{(\omega)}(c,y)- \mathcal{W}(x,y),\qquad x,y\in [0,c).
\end{align}
Moreover, a version of the measure:
\[
\widehat{L}^{(\omega)}(x, dy ):=\int_0^{\infty} \widehat{\Em}_{x}\left[\exp\left(-\int_0^{t} \omega(c-{Y}_s)\,ds\right); t<{T}_c, {Y}_t\in dy\right]\,dt
\]
is given by $\widehat{l}^{(\omega)}(x,0) \delta_0(dy)+\widehat{l}^{(\omega)}(x,y) dy$ for $x,y\in [0,c)$ and
\begin{align}
\widehat{l}^{(\omega)}(x,y)=&\ \frac{\mathcal{W}^{(\omega)}(c-x)}{\mathcal{W}^{(\omega)\prime}(c)}\cdot \frac{\partial}{\partial x}\mathcal{W}^{(\omega)}(c,c-y)- \mathcal{W}^{(\omega)}(c-x, c-y),\\
\widehat{l}^{(\omega)}(x,0)=&\ \mathcal{W}^{(\omega)}(c-x)\mathcal{W}^{(\omega)}(0)/\mathcal{W}^{(\omega)\prime}(c).
\end{align}
\end{thm}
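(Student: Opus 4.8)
The overall plan is to obtain both resolvents from the two ingredients already in hand for the free process: the two-sided killed resolvent $U^{(\omega)}$ of Theorem~\ref{thm:resolvent} and the two-sided exit weights $\mathcal{A}(\cdot,c),\mathcal{B}(\cdot,c)$ of Theorem~\ref{thm:main}. The mechanism is a strong-Markov renewal argument: up to the first time the reflected process meets its reflecting barrier it coincides with $X$, so that stretch of its occupation measure is exactly $U^{(\omega)}$, and by the Markov property the remaining occupation factorises through the resolvent started at the barrier.

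For $L^{(\omega)}$ the barrier is at $0$. Since $I_t=\inf_{s\le t}(0\wedge X_s)$, at the free first passage time $\tau_0^-$ a new infimum is created, $I_{\tau_0^-}=X_{\tau_0^-}$, so $Y_{\tau_0^-}=0$; also $Y_t=X_t$ and $T_c\wedge\tau_0^-=\tau_c^+\wedge\tau_0^-$ on $[0,\tau_0^-\wedge\tau_c^+)$. Conditioning at $\tau_0^-$ therefore gives the renewal identity $L^{(\omega)}(x,dy)=U^{(\omega)}(x,dy)+\mathcal{B}(x,c)\,L^{(\omega)}(0,dy)$ for $x\in[0,c)$. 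Using $\mathcal{Z}^{(\omega)}(0)=1$ and $\mathcal{W}^{(\omega)}(0,y)=0$ for $y>0$ (so that $U^{(\omega)}(0,dy)=\frac{\mathcal{W}^{(\omega)}(0)}{\mathcal{W}^{(\omega)}(c)}\mathcal{W}^{(\omega)}(c,y)\,dy$ and $1-\mathcal{B}(0,c)=\frac{\mathcal{W}^{(\omega)}(0)}{\mathcal{W}^{(\omega)}(c)}\mathcal{Z}^{(\omega)}(c)$), evaluation at $x=0$ yields $L^{(\omega)}(0,dy)=\frac{\mathcal{W}^{(\omega)}(c,y)}{\mathcal{Z}^{(\omega)}(c)}\,dy$; substituting back, the two copies of $\frac{\mathcal{W}^{(\omega)}(x)}{\mathcal{W}^{(\omega)}(c)}\mathcal{W}^{(\omega)}(c,y)$ cancel and leave precisely the claimed density $l^{(\omega)}(x,y)=\frac{\mathcal{Z}^{(\omega)}(x)}{\mathcal{Z}^{(\omega)}(c)}\mathcal{W}^{(\omega)}(c,y)-\mathcal{W}^{(\omega)}(x,y)$. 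Absolute continuity (no atom at $0$) reflects that a spectrally negative $X$ spends zero Lebesgue time at its running infimum.

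For the dual process I would first pass to $Z_t:=c-\widehat Y_t$, which under $\widehat\Pm$ is a copy of $X$ reflected at its supremum $c$ and killed on first passage below $0$, carrying weight $\omega(Z)$; then $\widehat L^{(\omega)}(x,\cdot)$ is the image of the occupation of $Z$, started at $c-x$, under $y\mapsto c-y$. The same conditioning, now at the first passage $\tau_c^+$ to the barrier $c$, gives $L^Z(x',dy')=U^{(\omega)}(x',dy')+\mathcal{A}(x',c)\,L^Z(c,dy')$ with $\mathcal{A}(x',c)=\mathcal{W}^{(\omega)}(x')/\mathcal{W}^{(\omega)}(c)$. Here lies the main obstacle: because $X$ has no positive jumps, $Z$ reaches the upper barrier by creeping, so $\mathcal{A}(c,c)=1$ and $U^{(\omega)}(c,\cdot)=0$, and the renewal relation degenerates to a tautology at $x'=c$ — the boundary resolvent $L^Z(c,\cdot)$ is not determined by it. I would close the system using the reflecting (zero-flux/Neumann) boundary condition $\partial_{x'}L^Z(x',\cdot)\big|_{x'=c}=0$, justified by a shrinking-interval creeping limit over $[c-\varepsilon,c]$: differentiating the renewal identity in $x'$ at the barrier and solving for $L^Z(c,dy')$ replaces the Dirichlet factor $\mathcal{W}^{(\omega)}(c,y')/\mathcal{W}^{(\omega)}(c)$ appearing in $U^{(\omega)}$ by $\frac{\partial}{\partial x}\mathcal{W}^{(\omega)}(c,y')/\mathcal{W}^{(\omega)\prime}(c)$, which after the same cancellation produces the stated density $\widehat l^{(\omega)}$.

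Finally, the atom $\widehat l^{(\omega)}(x,0)\,\delta_0(dy)$ must be accounted for separately: it is the occupation of $Z$ exactly at the reflecting barrier $c$, equivalently the time $X$ spends at its running maximum, which is positive only in the bounded-variation case where $\mathcal{W}^{(\omega)}(0)=W(0)>0$. Tracking this holding time through the renewal (its weight to be reached is $\mathcal{W}^{(\omega)}(c-x)/\mathcal{W}^{(\omega)\prime}(c)$ and its discounted mass per unit local time is $\mathcal{W}^{(\omega)}(0)$) gives $\widehat l^{(\omega)}(x,0)=\mathcal{W}^{(\omega)}(c-x)\mathcal{W}^{(\omega)}(0)/\mathcal{W}^{(\omega)\prime}(c)$. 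I expect the creeping boundary analysis — making the Neumann closure and the atom rigorous, and likewise fixing $L^{(\omega)}(0,\cdot)$ in the first part when $0$ is regular downward — to be the delicate step; a convenient check is that $\omega\equiv q$ collapses every $\omega$-scale function to $W^{(q)},Z^{(q)}$ and recovers Pistorius' resolvents, with the atom vanishing exactly when $X$ has unbounded variation.
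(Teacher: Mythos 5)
Your renewal skeleton is sound, and it is a genuinely different route from the paper's: the strong-Markov decompositions $L^{(\omega)}(x,dy)=U^{(\omega)}(x,dy)+\mathcal{B}(x,c)\,L^{(\omega)}(0,dy)$ and $L^{Z}(x',dy')=U^{(\omega)}(x',dy')+\mathcal{A}(x',c)\,L^{Z}(c,dy')$ are correct, and the algebra taking them to the stated densities checks out. The genuine gap is the closure at the reflecting barrier, and it is not a peripheral technicality: it is exactly where the signature features of the answer (the denominator $\mathcal{W}^{(\omega)\prime}(c)$ and the atom $\mathcal{W}^{(\omega)}(0)$) are created. For $L^{(\omega)}$, solving at $x=0$ requires dividing by $1-\mathcal{B}(0,c)=\frac{\mathcal{W}^{(\omega)}(0)}{\mathcal{W}^{(\omega)}(c)}\mathcal{Z}^{(\omega)}(c)$, which vanishes whenever $X$ has unbounded variation ($W(0)=0$); your equation then reads $L^{(\omega)}(0,dy)=L^{(\omega)}(0,dy)$ and determines nothing. (This case is repairable with tools already proved in the paper: decompose at $T_\varepsilon$, use $\mathcal{C}(0,\varepsilon)=1/\mathcal{Z}^{(\omega)}(\varepsilon)$ from Theorem \ref{thm:refl}, restrict to $(\varepsilon,c)$ where the pre-$T_\varepsilon$ occupation vanishes and $\mathcal{W}^{(\omega)}(\varepsilon,y)=0$, and let $\varepsilon\to0$ --- but this has to be carried out, not merely flagged.) For $\widehat{L}^{(\omega)}$ the gap is worse: as you note, creeping degenerates the renewal for every starting point, and your proposed closure, the Neumann condition $\partial_{x'}L^{Z}(x',\cdot)\big|_{x'=c}=0$, is precisely the statement requiring proof. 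It cannot be extracted from the renewal identity itself, which only shows that $\partial_{x'}l^{Z}(x',z)\big|_{x'=c}$ equals known terms plus $\frac{\mathcal{W}^{(\omega)\prime}(c)}{\mathcal{W}^{(\omega)}(c)}\,l^{Z}(c,z)$; asserting it is zero encodes the same boundary information as Pistorius' $W^{(q)\prime}(c)$-formulas, and justifying it for a reflected process with state-dependent killing needs an excursion or coupling argument you have not supplied (your ``shrinking-interval creeping limit'' runs into the overshoot of the first passage below $c-\varepsilon$). Likewise, the atom formula rests on the assertions ``discounted mass per unit local time is $\mathcal{W}^{(\omega)}(0)$'' and ``weight to be reached is $\mathcal{W}^{(\omega)}(c-x)/\mathcal{W}^{(\omega)\prime}(c)$'', which are restatements of the result rather than derivations.

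For contrast, the paper never confronts the barrier directly: Lemma \ref{lem:key} turns the $\omega$-resolvent into the identity $K^{(\omega)}f=K^{(0)}(f-\omega\,K^{(\omega)}f)$, involving only the known $q=0$ reflected resolvents of Proposition \ref{prop:refl}, where the boundary behavior (the $W'(c)$ denominator and the mass $W(0)$ of the Stieltjes measure $W(dy)$) has already been paid for by Pistorius. Lemma \ref{lem:equation} then identifies the unique locally bounded solution of the resulting renewal equation as $c_L\,\mathcal{Z}^{(\omega)}-\mathcal{R}^{(\omega)}f$, respectively $c_{\widehat L}\,\mathcal{W}^{(\omega)}-\mathcal{R}^{(\omega)}f(c-\cdot)$, and the constants are fixed by $L^{(\omega)}f(c)=0$, respectively by substituting back into the defining expression for $c_{\widehat L}$ and using \eqref{eqn:Wf:dif}--\eqref{eqn:Ww:dif}. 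If you want to keep your route, the cleanest repair is to import that same input at the barrier point only --- use Lemma \ref{lem:key} with Proposition \ref{prop:refl} to identify $L^{(\omega)}(0,\cdot)$ and $L^{Z}(c,\cdot)$ --- after which your renewal identities finish the proof for all starting points.
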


\begin{rmk}\rm
If $\omega(x)=q$ then, for $x\geq0$, we obtain the classical results:
\[
\mathcal{C}(x,c)= \frac{Z^{(q)}(x)}{Z^{(q)}(c)},\quad
\widehat{\mathcal{C}}(x,c)= Z^{(q)}(c-x)- q W^{(q)}(c-x) \frac{W^{(q)}(c)}{W^{(q)\prime}(c)}
\]
and the corresponding resolvents, given by:
\begin{align*}
U^{(\omega)}(x,dy)=&\ \left(\frac{W^{(q)}(x)}{W^{(q)}(c)} W^{(q)}(c-y)- W^{(q)}(x-y)\right)\,dy,\\
l^{(\omega)}(x,y)=&\ \frac{Z^{(q)}(x)}{Z^{(q)}(c)} W^{(q)}(c-y)- W^{(q)}(x-y),\\
\widehat{l}^{(\omega)}(x,y)=&\ \frac{W^{(q)}(c-x)}{W^{(q)\prime}(c)} W^{(q)\prime}(y)- W^{(q)}(y-x),\\
\widehat{l}^{(\omega)}(x,0)=&\ W^{(q)}(0) W^{(q)}(c-x)/W^{(q)\prime}(c);
\end{align*}
see \cite{Kyprianou2014:book:levy, Pistorius2004:passagetime:reflect} for details.
\end{rmk}

\bigskip

\subsection{One-sided upward problem}
To solve the one-sided upward problem we
have to assume additionally that:
\begin{equation}\label{addass}
\omega(x)=\phi\qquad \text{for all $x\leq 0$.}
\end{equation}
Let us define the function $\mathcal{H}^{(\omega)}$, on $\mathbb{R}$, satisfying the following equation
\begin{equation}
\mathcal{H}^{(\omega)}(x)= e^{\Phi(\phi) x}+ \int_0^{x} W^{(\phi)}(x-z)(\omega(z)-\phi) \mathcal{H}^{(\omega)}(z)\,dz.\label{eqn:cor:H}
\end{equation}

\begin{thm}\label{cor:one:up}
Assume that (\ref{addass}) holds. Then for $x,c\in\mathbb{R}$ and $x\leq c$,
\begin{equation}
\Em_{x}\left[\exp\left(-\int_0^{\tau_c^{+}} \omega(X_s)\,ds\right); \tau_c^{+}<\infty\right]=\frac{\mathcal{H}^{(\omega)}(x)}{\mathcal{H}^{(\omega)}(c)}.\label{eqn:cor:1}
\end{equation}
Moreover, for $x,y\leq c$,
\begin{align}
\Xi^{(\omega)}(x,dy):=&\ \int_0^{\infty} \Em_{x}\left[\exp\left(-\int_0^{t} \omega(X_s)\,ds\right); t<\tau_c^{+}, X_t\in dy\right]\,dt\non\\
=&\ \left(\frac{\mathcal{H}^{(\omega)}(x)}{\mathcal{H}^{(\omega)}(c)} \mathcal{W}^{(\omega)}(c,y)- \mathcal{W}^{(\omega)}(x,y)\right)\,dy.\label{Xi}
\end{align}
\end{thm}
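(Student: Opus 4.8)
The plan is to obtain the one-sided upward identity \eqref{eqn:cor:1} as the limit, when the lower barrier recedes to $-\infty$, of the two-sided identity of Corollary \ref{cor:2}. Fix $x\le c$. For $z<x$, Corollary \ref{cor:2} gives
\[
\Em_{x}\left[\exp\left(-\int_0^{\tau_c^{+}}\omega(X_s)\,ds\right);\ \tau_c^{+}<\tau_z^{-}\right]=\frac{\mathcal{W}^{(\omega)}(x,z)}{\mathcal{W}^{(\omega)}(c,z)}.
\]
As $z\downarrow-\infty$ we have $\tau_z^{-}\uparrow\infty$ almost surely, so the indicator $\mathbf{1}_{\{\tau_c^{+}<\tau_z^{-}\}}$ increases to $\mathbf{1}_{\{\tau_c^{+}<\infty\}}$; since the discount factor is bounded by $1$ and does not depend on $z$, monotone convergence identifies the left-hand side, in the limit, with the quantity in \eqref{eqn:cor:1}. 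The whole problem therefore reduces to computing $\lim_{z\to-\infty}\mathcal{W}^{(\omega)}(x,z)/\mathcal{W}^{(\omega)}(c,z)$.

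\textbf{Identifying the limit via renormalisation.} To evaluate this limit I would first renormalise. Writing \eqref{eqn:Ww:xy:2} with $\delta=\phi$ and using assumption \eqref{addass}, which makes $\omega(w)-\phi$ vanish for $w\le 0$, the defining equation collapses to
\[
\mathcal{W}^{(\omega)}(x,z)=W^{(\phi)}(x-z)+\int_0^{x}W^{(\phi)}(x-w)\,(\omega(w)-\phi)\,\mathcal{W}^{(\omega)}(w,z)\,dw .
\]
Set $h_z(x):=\psi'(\Phi(\phi))\,e^{\Phi(\phi)z}\,\mathcal{W}^{(\omega)}(x,z)$. Multiplying the last display by $\psi'(\Phi(\phi))e^{\Phi(\phi)z}$ turns it into the Volterra equation
\[
h_z(x)=\psi'(\Phi(\phi))\,e^{\Phi(\phi)z}\,W^{(\phi)}(x-z)+\int_0^{x}W^{(\phi)}(x-w)\,(\omega(w)-\phi)\,h_z(w)\,dw,
\]
whose kernel no longer depends on $z$. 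By the classical exponential asymptotics of scale functions, $e^{-\Phi(\phi)u}W^{(\phi)}(u)\to 1/\psi'(\Phi(\phi))$ as $u\to\infty$, so the inhomogeneous term $\psi'(\Phi(\phi))e^{\Phi(\phi)z}W^{(\phi)}(x-z)=\psi'(\Phi(\phi))e^{\Phi(\phi)x}\,[e^{-\Phi(\phi)(x-z)}W^{(\phi)}(x-z)]$ converges to $e^{\Phi(\phi)x}$, uniformly for $x$ in compact sets. Passing to the limit through the Volterra integral (the kernel being locally bounded, the Neumann iteration of Lemma \ref{lem:equation} converges uniformly on compacts and depends continuously on the inhomogeneous term) shows that $h_z$ converges to the locally bounded solution of \eqref{eqn:cor:H}, which by uniqueness is exactly $\mathcal{H}^{(\omega)}$.

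\textbf{Conclusion and the resolvent.} Consequently $\mathcal{W}^{(\omega)}(x,z)/\mathcal{W}^{(\omega)}(c,z)=h_z(x)/h_z(c)\to\mathcal{H}^{(\omega)}(x)/\mathcal{H}^{(\omega)}(c)$, which combined with the first paragraph yields \eqref{eqn:cor:1}. For the resolvent \eqref{Xi} I would run the same limit on the two-sided resolvent. Shifting Theorem \ref{thm:resolvent} by $z$ (Remark \ref{przesuniecie}) gives, for $x,y\in[z,c]$,
\[
U^{(\omega)}_{[z,c]}(x,dy)=\left(\frac{\mathcal{W}^{(\omega)}(x,z)}{\mathcal{W}^{(\omega)}(c,z)}\,\mathcal{W}^{(\omega)}(c,y)-\mathcal{W}^{(\omega)}(x,y)\right)dy .
\]
As $z\downarrow-\infty$, $\tau_z^{-}\wedge\tau_c^{+}\uparrow\tau_c^{+}$, so the occupation measures increase to $\Xi^{(\omega)}(x,\cdot)$ by monotone convergence; on the right-hand side the terms $\mathcal{W}^{(\omega)}(c,y)$ and $\mathcal{W}^{(\omega)}(x,y)$ are independent of $z$ while the ratio converges as above, giving \eqref{Xi}.

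\textbf{Main obstacle.} The delicate point is the passage of the limit through the Volterra integral in the second step: one must produce a bound on $h_z$ that is uniform in $z$ on compact $x$-intervals (via a Gronwall estimate on the iterated kernel) so that dominated convergence applies and the limit genuinely solves \eqref{eqn:cor:H}. A secondary technicality is the behaviour of the normalising constant $\psi'(\Phi(\phi))$ in degenerate cases, such as $\phi=0$ with $X$ not drifting to $+\infty$, where the exponential asymptotics of $W^{(\phi)}$ must be invoked in the appropriate form.
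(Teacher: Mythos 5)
Your proposal is correct, and its skeleton is the same as the paper's: both prove \eqref{eqn:cor:1} and \eqref{Xi} by letting the lower barrier recede to $-\infty$ in Corollary \ref{cor:2} and (the shifted) Theorem \ref{thm:resolvent}, after renormalising $\mathcal{W}^{(\omega)}(\cdot,z)$ by $\Phi'(\phi)^{-1}e^{\Phi(\phi)z}$ (your $\psi'(\Phi(\phi))e^{\Phi(\phi)z}$ is the same constant) and invoking the asymptotics $e^{-\Phi(\phi)u}W^{(\phi)}(u)\to\Phi'(\phi)$. Where you genuinely differ is the mechanism for identifying the limit of the renormalised functions. The paper argues probabilistically: by Corollary \ref{cor:2}, the ratio $\mathcal{W}^{(\omega)}(0,-\gamma)/\mathcal{W}^{(\omega)}(x,-\gamma)$ is itself a two-sided exit functional, hence increases in $\gamma$ to $\Em\left[\exp\left(-\int_0^{\tau_x^{+}}\omega(X_s)\,ds\right);\tau_x^{+}<\infty\right]$; combined with $e^{-\Phi(\phi)\gamma}\mathcal{W}^{(\omega)}(0,-\gamma)=e^{-\Phi(\phi)\gamma}W^{(\phi)}(\gamma)\to\Phi'(\phi)$, this yields existence and finiteness of the pointwise limit, and $\mathcal{H}^{(\omega)}$ is then \emph{defined} as that limit and shown to satisfy \eqref{eqn:cor:H} by dominated convergence in the renewal equation. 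You instead argue analytically: the renormalised functions solve a Volterra equation with a $z$-independent kernel and a forcing term converging uniformly on compacts, so continuity of the solution operator (your Gronwall/Neumann-iteration bound, in the spirit of Lemma \ref{lem:equation}) gives convergence to the unique solution of \eqref{eqn:cor:H}. Your route cleanly justifies exactly the step the paper compresses into ``applying the dominated convergence,'' since the operator bound supplies the required uniform-in-$z$ control; the paper's route buys, as a by-product of the construction, the probabilistic identity $\mathcal{H}^{(\omega)}(x)^{-1}=\Em\left[\exp\left(-\int_0^{\tau_x^{+}}\omega(X_s)\,ds\right);\tau_x^{+}<\infty\right]$ for $x\geq 0$, from which \eqref{eqn:cor:1} drops out immediately via the multiplicative (Markov) property. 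Both arguments share the caveat you flag at the end: they rely on $\Phi'(\phi)<\infty$, so the degenerate case $\phi=0$ with $\psi'(0+)\leq 0$ requires separate treatment in the paper as well.
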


\begin{rmk}\label{dodatkowauwaga}\rm
The $\omega$-type potential measure of $X$, without killing, can also be derived using the function $\mathcal{H}^{(\omega)}$. Assume, in addition to (\ref{addass}),
that $\omega(x)=q$, for $x\geq \Upsilon$ for some $\Upsilon, q\geq 0$. Then
\begin{align}
\Theta^{(\omega)}(x, dy):=&\ \int_0^{\infty} \Em_{x}\left[\exp\left(-\int_0^{t} \omega(X_s)\,ds\right); X_t\in dy\right]\,dt\non\\
=&\  \frac{e^{-\Phi(q) y} + \int_y^{\infty} e^{-\Phi(q) z}(\omega(z)-q) \mathcal{W}^{(\omega)}(z, y)\,dz}{\frac{q-\phi}{\Phi(q)-\Phi(\phi)}+ \int_0^{\infty} e^{-\Phi(q) z}(\omega(z)-q)\mathcal{H}^{(\omega)}(z)\,dz} \mathcal{H}^{(\omega)}(x) - \mathcal{W}^{(\omega)}(x,y),
\label{dodatkowauwaga2}\end{align}
where $\frac{q-\phi}{\Phi(q)-\Phi(\phi)}$ is understood as $\Phi'(\phi)^{-1}$, for the case $\phi=q$.

In order to prove this expression, we use the fact that $\omega(x)=q$, for $x\geq \Upsilon$, where $\Upsilon$ is some constant, and apply Lemma \ref{lem:equation}
to $\mathcal{H}^{(\omega)}(x)$  and $\mathcal{W}^{(\omega)}(x,y)$, which gives:
\begin{align*}
\mathcal{H}^{(\omega)}(c)=&\ e^{\Phi(\phi)c}+(q-\phi)\int_0^{c}e^{\Phi(\phi) (c-z)}W^{(q)}(z)\,dz\\
&+ \int_0^{c} W^{(q)}(c-z)(\omega(z)-q)\mathcal{H}^{(\omega)}(z)\,dz,\\
\mathcal{W}^{(\omega)}(c, y)=&\ W^{(q)}(c-y)+ \int_{y}^{c}W^{(q)}(c-z)(\omega(z)-q)\mathcal{W}^{(\omega)}(z,y)\,dz
\end{align*}
for $c\geq0$. Then, multiplying both equations through by $e^{-\Phi(q) c}$ and taking the limit as c $\rightarrow \infty$, we have 
\begin{eqnarray*}
\lefteqn{\lim_{c\to\infty}e^{-\Phi(q) c}\mathcal{H}^{(\omega)}(c)}\\
&&=\lim_{c\to\infty}\left[e^{(\Phi(\phi)-\Phi(q))c}\left(1+(q-\phi)\int_0^{c}e^{-\Phi(\phi) z}W^{(q)}(z)\,dz\right)\right.\non\\
&&\qquad \left. + \int_0^{c} e^{-\Phi(q) c}W^{(q)}(c-z)(\omega(z)-q)\mathcal{H}^{(\omega)}(z)\,dz\right]\non\\
&&=\Phi'(q)\left(\frac{q-\phi}{\Phi(q)-\Phi(\phi)}+ \int_0^{\infty} e^{-\Phi(q) z}(\omega(z)-q)\mathcal{H}^{(\omega)}(z)\,dz\right)
\end{eqnarray*}
and
\begin{eqnarray*}
\lefteqn{\lim_{c\to\infty}e^{-\Phi(q)c}\mathcal{W}^{(\omega)}(c,y)}\\
&&=\Phi'(q)\left(e^{-\Phi(q) y} + \int_y^{\infty} e^{-\Phi(q) z}(\omega(z)-q) \mathcal{W}^{(\omega)}(z, y)\,dz\right).
\end{eqnarray*}
This completes the proof of \eqref{dodatkowauwaga2}.
\end{rmk}

These theorems produce the basic framework for fluctuation theory of the state-dependent killed spectrally negative L\'evy process. In the next section, we will analyse some particular examples. For obvious reasons we will focus only on a non-constant rate function $\omega$.


\section{Examples}\label{examples}

\subsection{Occupation times of intervals until the first passage times}
\

Let $\omega(x)=p+ q \mathbf{1}_{(a,b)}(x)$ for $0<a<b$. 
Then, in this case, all the identities of interest can
be expressed in terms of the following functions:
\begin{align}
\mathcal{W}_{a}^{(p,q)}(x):=&\ W^{(p+q)}(x)- q \int_0^{a} W^{(p+q)}(x-y) W^{(p)}(y)\,dy\non\\
=&\ W^{(p)}(x)+ q \int_a^{x} W^{(p+q)}(x-y) W^{(p)}(y)\,dy, \label{wronnie}\\
\mathcal{Z}_{a}^{(p,q)}(x):=&\ Z^{(p+q)}(x)- q\int_0^{a} W^{(p+q)}(x-y) Z^{(p)}(y)\,dy \non\\
=&\ Z^{(p)}(x)+ q \int_a^{x} W^{(p+q)}(x-y) Z^{(p)}(y)\,dy,\label{zronnie}
\end{align}
and
\begin{align}
\mathcal{H}^{(p,q)}(x):=&\ e^{\Phi(p) x}\left(1+ q\int_0^{x} e^{-\Phi(p) y} W^{(p+q)}(y)\,dy\right); \label{hronnie}
\end{align}
see \cite{Guerin2014:occupationmeasure:levy, Loeffen2014:occupationtime:levy} for definitions.
Now, if we apply the above form of the $\omega$ function in equation \eqref{lem:eqn:1} of Remark \ref{rmk:1} and set $\delta = p$, we have 
\begin{align}
\mathcal{W}^{(\omega)}(x)=&\ W^{(p)}(x)+ q\int_{0}^{x} W^{(p)}(x-z) \mathbf{1}_{\{z\in[a,b]\}}\mathcal{W}^{(\omega)}(z)\,dz\label{eqn:exam:Ww:1}\\
=&\ W^{(p+q)}(x)- q \int_0^{x} W^{(p+q)}(x-z) \mathbf{1}_{\{z\notin(a,b)\}} \mathcal{W}^{(\omega)}(z)\,dz.\non
\end{align}
Hence
\[
\mathcal{W}^{(\omega)}(x)= \left\{
\begin{array}{l@{\quad\text{for\ }}l}
W^{(p)}(x) & x\in[0,a],\\
\D W^{(p+q)}(x)- q \int_0^{a} W^{(p+q)}(x-z)W^{(p)}(z)\,dz & x\in[a,b].
\end{array}\right.
\]
Comparing this identity with the definition of $\mathcal{W}_{a}^{(p,q)}(x)$ in \eqref{wronnie} gives:
\[
\mathcal{W}^{(\omega)}(x)=\mathcal{W}_{a}^{(p,q)}(x)\quad \text{for\ }x\in[0,b].
\]
Then, substituting above equality back into \eqref{eqn:exam:Ww:1} produces:
\[
\mathcal{W}_{a}^{(p,q)}(x)= W^{(p)}(x)+ q\int_a^{x} W^{(p)}(x-z) \mathcal{W}_{a}^{(p,q)}(z)\,dz\quad \text{for}\ x\in[0,b].
\]
In fact, this identity holds for all $x\geq0$ and hence, by \eqref{eqn:exam:Ww:1}, we have:
\begin{equation}\label{piertoz}
\mathcal{W}^{(\omega)}(x)= \mathcal{W}_{a}^{(p,q)}(x)- q \int_b^{x} W^{(p)}(x-z) \mathcal{W}_{a}^{(p,q)}(z)\,dz \quad \text{for all $x\geq0$}.
\end{equation}
Similar arguments give:
\begin{equation}\label{drtoz}
\mathcal{Z}^{(\omega)}(x)= \mathcal{Z}_{a}^{(p,q)}(x)- q \int_b^{x} W^{(p)}(x-z) \mathcal{Z}_{a}^{(p,q)}(z)\,dz  \quad \text{for all $x\geq0$}.
\end{equation}

Combining identities (\ref{piertoz}) - (\ref{drtoz}) and Theorem \ref{thm:main} reproduces
the main results of \cite{Loeffen2014:occupationtime:levy}, which give the law of occupation times in a given interval. 
That is, for $0\leq a\leq b\leq c$ and $p, q>0$,
\begin{align*}
\Em_x\left[e^{-p\tau_c^{+}-q\int_0^{\tau_c^{+}} \mathbf{1}_{(a,b)}(X_s)\,ds}; \tau_c^{+}<\tau_0^{-}\right]=&\  \frac{\mathcal{W}^{(\omega)}(x)}{\mathcal{W}^{(\omega)}(c)},\\
\Em_x\left[e^{-p\tau_0^{-}-q\int_0^{\tau_0^{-}} \mathbf{1}_{(a,b)}(X_s)\,ds}; \tau_0^{-}<\tau_c^{+}\right]=&\ \mathcal{Z}^{(\omega)}(x)- \frac{\mathcal{W}^{(\omega)}(x)}{\mathcal{W}^{(\omega)}(c)} \mathcal{Z}^{(\omega)}(c).
\end{align*}

If we now consider the particular case $\omega(x)=p+q\mathbf{1}_{(0,b)}(x)$ (that is $a=0$) one can prove, as before, that:
\begin{align}
\mathcal{H}^{(\omega)}(x)=&\ \mathcal{H}^{(p,q)}(x)- q\int_{b}^{x} W^{(p)}(x-y) \mathcal{H}^{(p,q)}(y)\,dy, \\
\mathcal{W}^{(\omega)}(x,y)=&\ \mathcal{W}_{-y}^{(p,q)}(x-y)- q \int_{b}^{x} W^{(p)}(x-z) \mathcal{W}_{-y}^{(p,q)}(z-y)\,dz.
\end{align}
Then, by Theorem \ref{cor:one:up} we obtain Corollary \cite[Cor. 2]{Loeffen2014:occupationtime:levy}. That is, for $x\leq c$,
\[
\Em_{x}\left[e^{-p\tau_c^{+}- q\int_0^{\tau_{c}^{+}} \mathbf{1}_{(0,b)}(X_s)\,ds}; \tau_c^{+}<\infty\right]=\frac{\mathcal{H}^{(\omega)}(x)}{\mathcal{H}^{(\omega)}(c)}.
\]
Moreover,
\begin{align}
\Xi^{(\omega)}(x,dy)=&\ \frac{\mathcal{H}^{(p,q)}(x)- q\int_{b}^{x} W^{(p)}(x-z) \mathcal{H}^{(p,q)}(z)\,dz}{\mathcal{H}^{(p,q)}(c)- q\int_{b}^{c} W^{(p)}(c-z) \mathcal{H}^{(p,q)}(z)\,dz}\non\\
&\ \times \left(\mathcal{W}_{-y}^{(p,q)}(c-y)- q \int_{b}^{c} W^{(p)}(c-z) \mathcal{W}_{-y}^{(p,q)}(z-y)\,dz\right)\,dy\non\\
&\ \quad- \left(\mathcal{W}_{-y}^{(p,q)}(x-y)- q \int_{b}^{x} W^{(p)}(x-z) \mathcal{W}_{-y}^{(p,q)}(z-y)\,dz\right)\,dy.
\end{align}
Similarly, from (\ref{dodatkowauwaga2}):
\begin{align}
\Theta^{(\omega)}(x,dy)=&\ \frac{e^{-\Phi(p) y}+ q\int_{0}^{b} e^{-\Phi(p) z} \mathcal{W}_{-y}^{(p,q)}(z-y)\,dz}{\Phi'(p)^{-1}+ q\int_{0}^{b} e^{-\Phi(p)z} \mathcal{H}^{(p,q)}(z)\,dz}\non\\
&\ \times \left(\mathcal{H}^{(p,q)}(x)- q\int_{b}^{x} W^{(p)}(x-z) \mathcal{H}^{(p,q)}(z)\,dz\right)\,dy\non\\
&\ \quad- \left(\mathcal{W}_{-y}^{(p,q)}(x-y)- q \int_{b}^{x} W^{(p)}(x-z) \mathcal{W}_{-y}^{(p,q)}(z-y)\,dz\right)\,dy.
\end{align}

In fact, one can prove additional identities:
\begin{eqnarray*}
\lefteqn{\mathcal{W}_{x-b}^{(p,q)}(x-y)- q\int_0^{-y} \mathcal{W}_{x-b}^{(p,q)}(x-y-z){W}^{(p)}(z)\,dz}\\&&
=\mathcal{W}_{-y}^{(p,q)}(x-y)- q\int_{b}^{x} W^{(p)}(x-z) \mathcal{W}_{-y}^{(p,q)}(z-y)\,dz
\end{eqnarray*}
and
\begin{eqnarray*}
\lefteqn{e^{-\Phi(p) b} \left(\mathcal{H}^{(p,q)}(b-y)- q\int_0^{-y} W^{(p)}(z) \mathcal{H}^{(p,q)}(b-y-z)\,dz\right)}\\
&&=e^{-\Phi(p) y} + q \int_0^{b} e^{-\Phi(p)z} \mathcal{W}_{-y}^{(p,q)}(z-y)\,dz,
\end{eqnarray*}
which allow us to reproduce the results of 
\cite[Thm. 1]{Guerin2014:occupationmeasure:levy}.
All other applications of the above identities can be found in 
\cite{Guerin2014:occupationmeasure:levy, Kyprianou2013:occupationtime:Rlevy, Landriault2011:occupationtime:levy, Li2013:occupationtime:diffusion, Loeffen2014:occupationtime:levy}.

\subsection{Omega-model}\

In actuarial science one could consider a spectrally negative L\'{e}vy process to model the surplus of an insurer and $\omega$ as a bankruptcy function 
(disappearing on the positive half-line).
For $x<0$ the quantity $\omega(x)dt$ describes the probability of bankruptcy within $dt$ time units, which
distinguishes the technical ruin (which happens at $\tau_0^-$) from the bankruptcy; see Gerber et al. \cite{Gerber2012:omega}.
In addition, bankruptcy may also occur if the risk process falls below some fixed level $-d<0$.
Then, the bankruptcy probability is defined by:
\begin{equation}
\varphi(x)=1-
\Em_x\left[e^{-\int_0^{\infty}\omega(X_s)\,ds}; \tau_{-d}^-=\infty\right].
\end{equation}

For this model, we consider the case of $X$ being a linear Brownian motion, that is
\begin{equation}
X_t=x+\sigma B_t +\mu t,
\end{equation}
for some $x,\sigma,\mu>0$. Further, we take the specific bankruptcy function
\[
\omega(x)=
\left[\gamma_0+ \gamma_1 (x+d)\right]\mathbf{1}_{\{x\in [-d,0]\}}.
\]
It is straightforward to check, form the definition of the scale function, that
\[
W(y)=\frac{1}{\mu}(1-e^{-\frac{2\mu}{\sigma^{2}}y}) \quad\text{and}\quad W^{(\gamma_{0})}(y)=\frac{2}{\sigma^{2}\rho}(e^{\rho_{2} y}-e^{-\rho_{1} y})
\]
with $\rho_{1}-\rho_{2}=\frac{2\mu}{\sigma^{2}}$, $\rho_{1}\rho_{2}=\frac{2\gamma_{0}}{\sigma^{2}}$ and $\rho=\rho_{1}+\rho_{2}=\frac{2\sqrt{\mu^{2}+ 2\gamma_{0}\sigma^{2}}}{\sigma^{2}}$.
Then, applying the shifting arguments given in Remark \ref{przesuniecie}, from Corollary \ref{cor:one:down} we have:
\begin{align}\label{ruinprob}
\varphi (x)=&1-\ c_{\mathcal{W}^{-1}(\infty,-d)} \mathcal{W}^{(\omega)}(x,-d),
\end{align}
where in above equation we denoted $\D c_{\mathcal{W}^{-1}(\infty,-d)}=\left[\lim_{c\to\infty} \mathcal{W}^{(\omega)}(c,-d)\right]^{-1}$ and,
by \eqref{eqn:Ww:xy} and Remark \ref{rmk:1}, the function $\mathcal{W}^{(\omega)}(x,-d)$  satisfies the following equation for all $x\in \mathbb{R}$:
\begin{align}
\mathcal{W}^{(\omega)}(x,-d)
=&\ W(x+d)+ \int_{0}^{x+d} W(x+d-y) \omega(y-d) \mathcal{W}^{(\omega)}(y-d,-d)\,dy.
\label{eqn:exam:2:1}
\end{align}
Thus for $x \in[-d,0]$ we have:
\begin{align}
\mathcal{W}^{(\omega)}(x,-d)
=&\ W^{(\gamma_{0})}(x+d)+ \gamma_{1} \int_{0}^{x+ d} W^{(\gamma_{0})}(x+d-y)  y \mathcal{W}^{(\omega)}(y-d,-d)\,dy
.\label{eqn:exam:2:2}
\end{align}

Let us take the change of variable $z=x+d\geq 0$ and denote $g(z):=\mathcal{W}^{(\omega)}(z-d,-d)=\mathcal{W}^{(\omega)}(x,-d)$.
Now, observing that $(\frac{d}{dz}-\rho_{2})(\frac{d}{dz}+\rho_{1}) W^{(\gamma_{0})}(z)=0$, 
from equation \eqref{eqn:exam:2:2}, for $z\in [0,d]$ (hence for $x\in [-d,0]$), 
and \cite[Lem. 8.5, Problem 8.6]{Kyprianou2014:book:levy}, it follows that
\[
\left(\frac{d}{dz}-\rho_{2}\right)\left(\frac{d}{dz}+\rho_{1}\right)g(z)=
\frac{2\gamma_{1}}{\sigma^{2}} z g(z)
\]
with the boundary conditions $g(0)=0$
and $g'(0)=2/\sigma^{2}$.
Then, from \cite{Gerber2012:omega} one can conclude that for $x\in[-d,0]$,
\begin{equation}
\mathcal{W}^{(\omega)}(x,-d)= e^{- \frac{\mu (x+d)}{\sigma^{2}}}\left[
m_{1} {\rm Ai} \left(\sqrt[3]{\frac{2 \gamma_{1}}{\sigma^{2}}} \left(x+d+ \frac{\sigma^{2} \rho^{2}}{8\gamma_{1}}\right)\right)+ m_{2} {\rm Bi} \left(\sqrt[3]{\frac{2 \gamma_{1}}{\sigma^{2}}} \left(x+d+ \frac{\sigma^{2} \rho^{2}}{8\gamma_{1}}\right)\right)\right],\label{idw-d}
\end{equation}
where $m_{1}$ and $m_{2}$ are constants chosen in such a way that the boundary condition holds and ${\rm Ai}(x)$, ${\rm Bi}(x)$ denote the Airy and Bairy functions, respectively.

In order to identify $g(z)$, for $z\geq d$ (hence $x\geq 0$), note that from \eqref{eqn:exam:2:1} we have
\begin{equation}\label{pochodnap}
g'(z)=\frac{2}{\sigma^{2}} e^{-\frac{2\mu}{\sigma^{2}} z}+ \frac{2}{\sigma^{2}} \int_{0}^{z} e^{-\frac{2\mu}{\sigma^{2}} (z-y)} \omega(y-d) g(y)\,dy
\end{equation}
and hence
\begin{align*}
g(z)
=&\ \frac{1-e^{-\frac{2\mu}{\sigma^{2}}d}}{\mu}+ \int_{0}^{d} \left(\frac{1-e^{-\frac{2\mu}{\sigma^{2}}(d-y)}}{\mu}\right)\omega(y-d) g(y)\,dy\\
&\ + \frac{e^{-\frac{2\mu}{\sigma^{2}}d}- e^{-\frac{2\mu}{\sigma^{2}}z}}{\mu} \left(1+ \int_{0}^{d} e^{\frac{2\mu}{\sigma^{2}} y} \omega(y-d) g(y)\,dy\right).
\end{align*}
Thus, it follows that, for $x\geq 0$:
\begin{align*}
\mathcal{W}^{(\omega)}(x,-d)
=&\ g(d)+ \left(1- e^{-\frac{2\mu}{\sigma^{2}}x}\right) \frac{\sigma^{2}}{2\mu} g'(d),
\end{align*}
where $g(d)=\mathcal{W}^{(\omega)}(0,-d)$ and
$g'(d)=\frac{\partial}{\partial x}\mathcal{W}^{(\omega)}(x,-d)|_{x=0}$ for $\mathcal{W}^{(\omega)}(x,-d)$ given in (\ref{idw-d}).
Moreover, we have: $$c_{\mathcal{W}^{-1}(\infty,-d)}^{-1}= g(d)+ \frac{\sigma^{2}}{2\mu} g'(d).$$
Observe that for $x>0$ the bankruptcy probability is proportional to the classical ruin probability for the linear Brownian motion:
\[\varphi (x)=e^{-\frac{2\mu}{\sigma^{2}}x} \frac{\sigma^{2}}{2\mu} c_{\mathcal{W}^{-1}(\infty,-d)}g'(d)
=e^{-\frac{2\mu}{\sigma^{2}}x} \frac{  \frac{\sigma^{2}}{2\mu}g'(d)}{g(d)+\frac{\sigma^{2}}{2\mu} g'(d)}.
\]

\subsection{Exponential $\omega$ function and self-similar processes}

Our main motivation for considering function $\omega$ of this type 
comes from solving the exit problems for a positive self-similar Markov process (pssMp)
$\{\iota_t, t\geq 0\}$ with self-similarity parameter $\xi>0$ and associated spectrally positive
L\'evy process $\hat{X}_t=-X_t$. Note that, $\iota_t$ has no negative jumps; see \cite{survey} for details.
For $b<0<c$ we define the times
\[
\sigma_{b}^{+}:=\inf\{t>0: \iota_t>e^{-b}\}\quad\text{and}\quad\sigma_{c}^{-}=\inf\{t>0: \iota_t<e^{-c}\},
\]
where we take $\iota_0=1$. Using the following Lamperti  transformation (see e.g. \cite{survey})
\[\iota_t=e^{-X_{\tau(t)}}\quad
\text{for}\quad \tau(t)=\inf\left\{s\geq 0: \int_0^s e^{-\xi X_w}\,dw\geq t\right\},\]
from
Corollary \ref{cor:2} and Remark \ref{przesuniecie} we have:
\begin{align*}
\Em_1 \left[e^{-\varrho_{0}\sigma_{c}^{-}}; \sigma_{c}^{-}<\sigma_{b}^{+}\right]=&\
\Em \left[\exp\left(-\varrho_{0} \int_0^{\tau^{+}_{c}} e^{-\xi X_s}\,ds\right); \tau_{c}^{+}<\tau_{b}^{-}\right]=
\frac{\mathcal{W}^{(\omega)}(-b)} {\mathcal{W}^{(\omega)}(c-b)},\\
\Em_1 \left[e^{-\varrho_{0}\sigma_{b}^{+}}; \sigma_{b}^{+}<\sigma_{c}^{-}\right]=&\
\Em \left[\exp\left(-\varrho_{0}\int_0^{\tau^{-}_{b}} e^{-\xi X_s}\,ds\right); \tau_{b}^{-}<\tau_{c}^{+}\right]
\\=&\
\mathcal{Z}^{(\omega)}(-b)-  \frac{\mathcal{W}^{(\omega)}(-b)} {\mathcal{W}^{(\omega)}(c-b)} \mathcal{Z}^{(\omega)}(c-b),
\end{align*}
for
\begin{equation}\label{expoomega}
\omega(x)=\varrho\cdot e^{-\xi x},
\end{equation}
with $$\varrho=\varrho_{0} e^{\xi b}.$$
If one considers the above identities with $\varrho=0$, we obtain the identities given in \cite[Lemma 1]{Chaumont}.

We will now identify the $\omega$-scale functions $\mathcal{W}^{(\omega)}$ and $\mathcal{Z}^{(\omega)}$, for $\omega$ given in
(\ref{expoomega}).
For the above case, these functions are the unique solutions satisfying the following equations:
\begin{align*}
\mathcal{W}^{(\omega)}(x)=&\ W(x)+ \varrho \int_0^{x} W(x-y) e^{-\xi y} \mathcal{W}^{(\omega)}(y)\,dy,\\
\mathcal{Z}^{(\omega)}(x)=&\ 1+ \varrho \int_0^{x} W(x-y) e^{-\xi y} \mathcal{Z}^{(\omega)}(y)\,dy.
\end{align*}
Now, by taking Laplace transforms on both sides of the above equations we derive, for $s>\Phi(0)$:
\[
\widehat{\mathcal{W}^{(\omega)}}(s)=\psi(s)^{-1} (1+ \varrho\widehat{\mathcal{W}^{(\omega)}}(s+\xi)) \text{\quad and\quad } \widehat{\mathcal{Z}^{(\omega)}}(s)=s^{-1}+ \varrho \psi(s)^{-1} \widehat{\mathcal{Z}^{(\omega)}}(s+\xi).
\]
Then, it follows that 
\begin{align}
\widehat{\mathcal{W}^{(\omega)}}(s)=&\ \frac{1}{\varrho}\sum_{n=0}^{\infty}\prod_{0\leq k\leq n}  \left(\frac{\varrho}{\psi(s+ k\xi)}\right),\\
\widehat{\mathcal{Z}^{(\omega)}}(s)=&\ \sum_{n\geq0}\frac{1}{s+n \xi} \prod_{0\leq k\leq n-1} \left(\frac{\varrho}{\psi(s+k\xi)}\right).
\end{align}

Inverting the above Laplace transforms allows us to identify the $\omega$-scale functions for particular
L\'evy processes.

{\bf Drift minus compound Poisson process: $X_t=\mu t- \sum_{i=1}^{N_t}U_i$.}
Let $N_t$ be a Poisson process with intensity $\vartheta$ and $\{U_i\}_{i=1}^\infty$ 
a sequence of i.i.d. exponentially random variables, independent of $N$, with parameter $\rho$.  
Then, $\psi(s)=\frac{s+\varsigma}{s+\rho} \mu s$, where $\varsigma=\rho- \frac{\vartheta}{\mu}$. If we assume that $\varsigma, \rho \notin\{n\xi, n\in \mathbb{Z}\}$, then 
we have:
\begin{align*}
\mathcal{W}^{(\omega)}(x)=&\ \frac{\rho}{\varsigma\mu}\cdot {_1 F_1}\left(\frac{\xi+\rho}{\xi}, \frac{\xi+ \varsigma}{\xi}; \frac{\varrho}{\mu\xi}\right)\cdot {_1 F_1}\left(\frac{\xi-\rho}{\xi}, \frac{\xi-\varsigma}{\xi}; \frac{-\varrho}{\mu\xi} e^{-\xi x}\right)\\
&+ \frac{\varsigma-\rho}{\varsigma\mu} e^{-\varsigma x}\cdot {_1 F_1}\left(\frac{\xi+\rho-\varsigma}{\xi}, \frac{\xi-\varsigma}{\xi}; \frac{\varrho}{\mu\xi}\right)
 \cdot {_1 F_1}\left(\frac{\xi+ \varsigma- \rho}{\xi}, \frac{\xi +\varsigma}{\xi}; \frac{-\varrho}{\mu\xi} e^{-\xi x}\right),\\
\mathcal{Z}^{(\omega)}(x)=&\ {_1 F_1}\left(\frac{\rho}{\xi},\frac{\varsigma}{\xi}; \frac{\varrho}{\mu\xi}\right)\cdot {_1 F_1}\left(\frac{\xi-\rho}{\xi}, \frac{\xi-\varsigma}{\xi}; \frac{-\varrho}{\mu\xi} e^{-\xi x}\right)\\
&+ \frac{(\rho-\varsigma)\varrho}{\varsigma\mu(\xi-\varsigma)} e^{-\varsigma x}\cdot {_1 F_1}\left(\frac{\xi+\theta-\varsigma}{\xi}, \frac{\xi-\varsigma}{\xi}; \frac{\varrho}{\mu\xi}\right)
 \cdot {_1 F_1}\left(\frac{\xi+\varsigma-\rho}{\xi}, \frac{\xi+\varsigma}{\xi}; \frac{-\varrho}{\mu\xi} e^{-\xi x}\right),
\end{align*}
where $\D {_1 F_1}(a,b; z)=\sum_{n\geq0} \frac{\Gamma(b) \Gamma(a+n)}{\Gamma(a) \Gamma(b+n)} \frac{z^n}{n!}$ is the Kummer's confluent hypergeometric function.\\

{\bf Linear Brownian motion: $X_t=\sigma B_t+\mu t$.}
In this case $\psi(s)=D s (s+R)$ where $\D D=\frac{1}{2}\sigma^2$ and $\D R=\frac{2\mu}{\sigma^{2}}$. If we assume that $R\notin\{n \xi, n\in \mathbb{Z}\}$, then
\begin{align}
\mathcal{W}^{(\omega)}(x)=&\
\frac{\Gamma(1+\alpha)\Gamma(1-\alpha)}{D R}
e^{-R x/2}\cdot \left[I_{\alpha}(\beta)I_{-\alpha}(\beta e^{-\xi x/2})- I_{-\alpha}(\beta) I_{\alpha}(\beta e^{-\xi x/2})\right],
\non\\
\mathcal{Z}^{(\omega)}(x)=&\ \frac{\sqrt{\varrho}\, \Gamma(1+\alpha)\Gamma(1-\alpha)}{R\sqrt{D}} e^{-Rx/2}\cdot \left[  I_{\alpha-1}\left(\beta\right)  I_{-\alpha}\left(
\beta e^{-\xi x/2}\right)-I_{1-\alpha}\left(\beta\right) I_{\alpha}\left(\beta e^{-\xi x/2}\right)\right],\non
\end{align}
where
\[
\alpha=\frac{R}{\xi}\notin \mathbb{N},\quad
\beta=\frac{2\sqrt{\varrho}}{\xi \sqrt{D}}
\]
and
$\D I_{\theta}(z)=\sum_{n\geq0} \frac{1} {n! \Gamma(\theta+n+1)}\left(\frac{z}{2}\right)^{2n+\theta}$ is the modified Bessel function of first type.

The above results can be used to reproduce another interesting observation. Note that, from the above representations of the $\omega$ scale functions we have that:
\begin{eqnarray*}
\mathcal{W}^{(\omega)}(x,y)&=&\frac{\Gamma(1-\alpha)\Gamma(1+\alpha)}{D R}
e^{-R x/2} e^{\xi \alpha y/2}\cdot
\left[I_{\alpha}(\beta e^{-\xi y/2})I_{-\alpha}(\beta e^{-\xi x/2})- I_{-\alpha}(\beta e^{-\xi y/2}) I_{\alpha}(\beta e^{-\xi x/2})\right].
\end{eqnarray*}
Now, recalling Corollary \ref{cor:2} and taking the limits $y \rightarrow -\infty$ and then $c \rightarrow \infty$ in the above equation, we obtain
\begin{equation}
\Em_{x}\left[\exp\left(-\varrho \int_{0}^{\infty} e^{-\xi (\sigma B_{t}+\mu t)}\,dt\right)\right]= \frac{2}{\Gamma(\alpha)} \left(\frac{\beta e^{-\xi x/2}}{2}\right)^{\alpha} K_{\alpha}(\beta e^{-\xi x/2}),
\end{equation}
where $K_{v}(z)$ is a modified Bessel function of the first kind.
In deriving the above identity, we also used the following asymptotics:
\begin{align}
&\ I_{v}(z)\sim \frac{e^{z}}{\sqrt{2\pi z}}\left[1+ \sum_{n\geq 1} (-1)^{n} \frac{\prod_{k=1}^{n} (4v^{2}-k^{2})}{n! (8z)^{n}}\right] && \text{as $|z|\to\infty$, $|\arg z|<\frac{\pi}{2}$},\\
&\ K_{v}(z)=\frac{\pi}{2} \frac{I_{-v}(z)-I_{v}(z)}{\sin(v\pi)} \sim \frac{1}{2} \Gamma(v) (\frac{1}{2}z)^{-v} &&  \text{as $z\to0$, $\Re v>0$};
\end{align}
see \cite[9.7.1 and 9.6.9]{Abramowitz1972Handbook}.
Taking $x=0$, $\xi=2$ and $\sigma=1$ recovers a well-known identity in law:
\[2  \int_{0}^{\infty} e^{-2(B_{s}+\mu s)}\,ds \stackrel{d}{=} (\gamma_{\mu})^{-1},\]
where $\gamma_{\mu}$ is a gamma variable with index $\mu$; see \cite{Bertoin2005:EF:survey}.

\subsection{Step $\omega$-function}
In this example we will consider
a positive step function $\omega(x)=p_0 + \sum_{j=1}^{n} (p_{j}-p_{j-1}) \mathbf{1}(x>x_{j})$ for
fixed $n$, fixed sequence $\{p_j\}_{j=0}^n$ and finite division of the real line $x_{k+1}\geq x_{k}$, for $k=1,2,\cdots, n-1$.

In the following, we will prove that
\[\wo(x,y)=\w_{n}(x,y)\quad \text{for}\quad x,y\in\mathbb{R},\]
where $\w_{n}(x,y)$ is defined recursively by 
\begin{align}
\w_{k+1}(x,y)=\w_{k}(x,y)+ (p_{k+1}-p_{k}) \int_{x_{k+1}}^{x} W^{(p_{k+1})}(x-z)\w_{k}(z,y)\,dz,\label{eqn:5}
\end{align}
where $\w_{0}(x,y)=W^{(p_{0})}(x-y)$.

Denote $\omega_{k}(x):=p_0+ \sum_{j=1}^{k} (p_{j}-p_{j-1}) \mathbf{1}(x>x_{j})$ with $\omega_{0}(z)=p_{0}$
and let $\w_{k}(\cdot,\cdot)$ be the $\omega$-scale function with respect to $\omega_{k}(\cdot)$.
In order to prove (\ref{eqn:5}) holds, note that from (\ref{eqn:Ww:xy:2}), we have 
\begin{align}
\w_{k}(x,y)- W^{(p_{k+1})}(x-y)
=&\ \int_{y}^{x} W^{(p_{k+1})}(x-z)(\omega_{k}(z)-p_{k+1})\w_{k}(z,y)\,dz,
\label{eqn:3}\\
\w_{k+1}(x,y)- W^{(p_{k+1})}(x-y)
=&\ \int_{y}^{x} W^{(p_{k+1})}(x-z)(\omega_{k+1}(z)-p_{k+1})\w_{k+1}(z,y)\,dz.
\label{eqn:4}
\end{align}

Then, by observing that $\omega_{k+1}(z)-p_{k+1}=0$, for $z\geq x_{k+1}$, and $\omega_{k+1}(z)=\omega_{k}(z)$, for $z \leq x_{k+1}$, 
from Lemma \ref{lem:equation} we can conclude that
$\wo_{k+1}(z,y)=\wo_{k}(z,y)$ for $z<x_{k+1}$. Thus, from \eqref{eqn:4} it follows that
\begin{align*}
\w_{k+1}(x,y)- W^{(p_{k+1})}(x-y)
=&\ \int_{y}^{x} W^{(p_{k+1})}(x-z)(\omega_{k+1}(z)-p_{k+1})\w_{k+1}(z,y)\,dz\\
=&\ \int_{y}^{x_{k+1}} W^{(p_{k+1})}(x-z)(\omega_{k+1}(z)-p_{k+1})\w_{k+1}(z,y)\,dz\\
=&\ \int_{y}^{x_{k+1}}W^{(p_{k+1})}(x-z)(\omega_{k}(z)-p_{k+1})\w_{k}(z,y)\,dz.
\end{align*}
Comparing this identity with \eqref{eqn:3} produces:
\[
\w_{k+1}(x,y)=\w_{k}(x,y)- \int_{x_{k+1}}^{x} W^{(p_{k+1})}(x-z)(\omega_{k}(z)-p_{k+1})\w_{k}(z,y)\,dz
\]
which implies \eqref{eqn:5} since $(\omega_{k}(z)-p_{k+1})=p_{k}-p_{k+1}$, for $z>x_{k+1}$.

Similar considerations can produce the following
representation of the second $\omega$-scale function:
\[\zo(x,y)=\z_{n}(x,y),\quad \text{for}\ x\geq y,\]
where $\z_{0}(x,y)=Z^{(p_{0})}(x-y)$ and
\begin{align*}
\z_{k+1}(x,y)=\z_{k}(x,y)+ (p_{k+1}-p_{k}) \int_{x_{k+1}}^{x} W^{(p_{k+1})}(x-z)\z_{k}(z,y)\,dz.\label{eqn:6}
\end{align*}

\section{Some preliminary facts}\label{sec:facts}

To evaluate $\mathcal{A}$ and $\mathcal{B}$ in Theorem \ref{thm:main}, we need
the following classical results which can be found e.g. in \cite[Chap. 8]{Kyprianou2014:book:levy}.

\begin{prop}\label{prop:levy} For $0<x<c$ and  $q\geq0$ we have:
\begin{align*}
\Em\left[e^{-q\tau_c^{+}}\right]=&\ e^{-\Phi(q) c}, &
\Em_x\left[e^{-q\tau_c^{+}}; \tau_c^{+}<\tau_0^{-}\right]=&\ \frac{W^{(q)}(x)}{W^{(q)}(c)},\\
\Em_x\left[e^{-q\tau_0^{-}}\right]=&\ Z^{(q)}(x)-\frac{q}{\Phi(q)}W^{(q)}(x), &
\Em_x\left[e^{-q\tau_0^{-}}; \tau_0^{-}<\tau_c^{+}\right]=&\ Z^{(q)}(x)- Z^{(q)}(c)\frac{W^{(q)}(x)}{W^{(q)}(c)}.
\end{align*}
Moreover,
\begin{align*}
R^{(q)}(x,dy):=&\ \int_0^{\infty} e^{-q t} \Pm_x(X_t\in dy; t<\tau_0^{-})\,dt= \left(e^{-\Phi(q)y} W^{(q)}(x)- W^{(q)}(x-y)\right)\,dy,\\
U^{(q)}(x,dy):=&\ \int_0^{\infty} e^{-q t} \Pm_x(X_t\in\,dy; t<\tau_0^{-}\wedge \tau_c^{+})\,dt= \left(\frac{W^{(q)}(x)W^{(q)}(c-y)}{W^{(q)}(c)}-W^{(q)}(x-y)\right)\,dy.
\end{align*}
\end{prop}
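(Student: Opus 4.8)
The statement collects standard fluctuation identities, so the plan is to assemble them from three ingredients: the exponential (Esscher) martingale, the fundamental martingale attached to the scale function, and the strong Markov property, passing to the limit $c\to\infty$ whenever a one-sided identity is wanted. I would present the upward identities first and deduce the downward ones from them.

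First I would treat the pure upward passage $\Em[e^{-q\tau_c^{+}}]$. Since $\psi(\Phi(q))=q$, the process $M_t=\exp(\Phi(q)X_t-qt)$ is a nonnegative $\Pm$-martingale. Because $X$ has no positive jumps it creeps across level $c$, so $X_{\tau_c^{+}}=c$ on $\{\tau_c^{+}<\infty\}$; optional stopping of the bounded martingale $M_{t\wedge\tau_c^{+}}$ and letting $t\to\infty$ (the contribution on $\{\tau_c^{+}=\infty\}$ vanishing) gives $1=e^{\Phi(q)c}\,\Em[e^{-q\tau_c^{+}}]$, which is the first identity. For the two-sided upward exit the key fact I would invoke is that $\{e^{-q(t\wedge\tau_0^{-})}W^{(q)}(X_{t\wedge\tau_0^{-})}\}_{t\ge0}$ is a $\Pm_x$-martingale, reflecting that $W^{(q)}$ solves $(\mathcal{L}-q)W^{(q)}=0$ on $(0,\infty)$ and vanishes on $(-\infty,0)$. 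Stopping at $S=\tau_0^{-}\wedge\tau_c^{+}$ (which is a.s. finite, as $X$ cannot remain in a bounded interval forever), and using that $W^{(q)}(X_{\tau_0^{-}})=0$ — the process either jumps strictly below the level or, in the unbounded-variation case, creeps to $0$ where $W^{(q)}(0)=0$ — collapses the lower-exit term and yields $W^{(q)}(x)=W^{(q)}(c)\,\Em_x[e^{-q\tau_c^{+}};\tau_c^{+}<\tau_0^{-}]$, the second identity.

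For the downward two-sided exit I would run the same optional-stopping argument with $Z^{(q)}$ in place of $W^{(q)}$: since $Z^{(q)}\equiv1$ on $(-\infty,0]$ and $(\mathcal{L}-q)Z^{(q)}=0$ on $(0,\infty)$, stopping at $S$ gives $Z^{(q)}(x)=\Em_x[e^{-q\tau_0^{-}};\tau_0^{-}<\tau_c^{+}]+Z^{(q)}(c)\,\Em_x[e^{-q\tau_c^{+}};\tau_c^{+}<\tau_0^{-}]$, and inserting the upward identity already proved produces the claimed expression for $\Em_x[e^{-q\tau_0^{-}};\tau_0^{-}<\tau_c^{+}]$. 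The unrestricted downward identity then follows by letting $c\to\infty$, using $\tau_c^{+}\to\infty$ a.s. together with dominated convergence and the asymptotic ratio $Z^{(q)}(c)/W^{(q)}(c)\to q/\Phi(q)$, read off from $W^{(q)}(c)\sim e^{\Phi(q)c}/\psi'(\Phi(q))$ and $Z^{(q)}(c)\sim \tfrac{q}{\Phi(q)}e^{\Phi(q)c}/\psi'(\Phi(q))$. For the resolvents, I would disintegrate the $q$-potential of $X$ through the strong Markov property: for $R^{(q)}$ one subtracts from the potential density of the unkilled process the mass accrued after the first downcrossing, expressed via the overshoot law at $\tau_0^{-}$ and the ascending-ladder potential $e^{-\Phi(q)y}$, giving $R^{(q)}(x,dy)=(e^{-\Phi(q)y}W^{(q)}(x)-W^{(q)}(x-y))\,dy$; then $U^{(q)}$ follows by a further application of the strong Markov property at $\tau_c^{+}$, removing from $R^{(q)}$ the mass from paths that first exit upward, weighted by $W^{(q)}(x)/W^{(q)}(c)$.

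The main obstacle is not any single computation but the justification of the scale-function martingales and the associated optional-stopping steps: $W^{(q)}$ and $Z^{(q)}$ are defined only through Laplace transforms, so establishing $(\mathcal{L}-q)W^{(q)}=0=(\mathcal{L}-q)Z^{(q)}$ on $(0,\infty)$ with enough regularity for a Dynkin/It\^o argument, and checking the integrability needed to stop at $S$ and to pass to the limit on $\{\cdot=\infty\}$, is where the real work lies; likewise the resolvent disintegrations require the compensation formula and care with the possible creeping contribution at $0$. All of these are carried out in \cite[Chap. 8]{Kyprianou2014:book:levy}, which is why the statement is quoted as classical rather than reproven here.
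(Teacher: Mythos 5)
Your proposal is correct in outline, but the comparison here is lopsided: the paper does not prove Proposition~\ref{prop:levy} at all --- it is quoted as a collection of classical identities with the citation \cite[Chap.~8]{Kyprianou2014:book:levy}, exactly the concession you make in your closing paragraph. What you have written is a faithful reconstruction of the standard textbook (martingale) proofs: the Esscher martingale $e^{\Phi(q)X_t-qt}$ for $\Em[e^{-q\tau_c^{+}}]$, optional stopping of the $W^{(q)}$- and $Z^{(q)}$-martingales at $\tau_0^{-}\wedge\tau_c^{+}$ (your remark that $W^{(q)}(X_{\tau_0^{-}})=0$ in all cases, because downward creeping forces $\sigma>0$ and hence $W^{(q)}(0)=0$, is the right way to kill the lower boundary term), the limit $c\to\infty$ for the one-sided identity, and the strong Markov decomposition $U^{(q)}(x,dy)=R^{(q)}(x,dy)-\frac{W^{(q)}(x)}{W^{(q)}(c)}R^{(q)}(c,dy)$, which indeed reproduces the stated density. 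Two steps are thinner than the rest: at $q=0$ the term $\frac{q}{\Phi(q)}$ must be read as $\lim_{q\downarrow0}q/\Phi(q)$, and your asymptotics $W^{(q)}(c)\sim e^{\Phi(q)c}/\psi'(\Phi(q))$ presuppose $\Phi(q)>0$, so one should pass to the limit of the ratio $Z^{(q)}(c)/W^{(q)}(c)$ directly; and the $R^{(q)}$ derivation secretly needs the joint transform $\Em_x\bigl[e^{-q\tau_0^{-}+\Phi(q)X_{\tau_0^{-}}}\bigr]=e^{\Phi(q)x}-\psi'(\Phi(q))W^{(q)}(x)$ (via an Esscher change of measure), which your phrase ``overshoot law at $\tau_0^{-}$'' glosses over. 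Finally, note the methodological contrast: your argument uses precisely the martingale machinery that the paper's own contribution is designed to avoid --- the authors take these identities as black-box inputs and build their $\omega$-killed generalizations from Poisson-process and Markov-property arguments --- which is why the paper leaves this Proposition as a citation rather than a proof.
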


Another important proposition was proved in \cite{Pistorius2004:passagetime:reflect} and it will be heavily used in this paper
when we work with the reflected processes $Y_t$ and $\widehat{Y}_t$.
\begin{prop}\label{prop:refl} Let $x\in[0,a]$ and $q\geq 0$. Then
\begin{align*}
\Em_{x}\left[e^{-qT_a}\right]=&\ Z^{(q)}(x)/Z^{(q)}(a),\\
\Em_{-x}\left[e^{-q\widehat{T}_{a}}\right]=&\ Z^{(q)}(a-x)- q W^{(q)}(a-x) W^ {(q)}(a)/{W^{(q)\prime}_{+}}(a).
\end{align*}
Moreover,
\begin{align*}
L^{(q)}f(x):=&\ \int_0^{\infty} e^{-q t} \Em_{x}\left[f(Y_t), t<T_a\right]\,dt\non\\
=&\ \int_{[0,a)} f(y)\left(\frac{Z^{(q)}(x)}{Z^{(q)}(a)} W^{(q)}(a-y)- W^{(q)}(x-y)\right)\,dy,\\
\widehat{L}^{(q)}f(x):=&\ \int_0^{\infty} e^{-qt} \Em_{-x}\left[f(\widehat{Y}_t), t<\widehat{T}_a\right]\,dt\non\\
=&\ \int_{[0,a)} f(y) \left( \frac{W^{(q)}(a-x)}{W^{(q)\prime}(a)} W^{(q)}(dy)- W^{(q)}(y-x)\,dy\right),
\end{align*}
where $W^{(q)}(dy)$ denotes the Stieltjes measure associated with $W^{(q)}(x)$, with possible
mass, $W^{(q)}(0)$, at zero.
\end{prop}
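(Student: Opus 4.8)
The plan is to treat the two reflected processes separately and, in each case, to reduce the Laplace transform of the passage time to the (non-reflected) two-sided quantities recorded in Proposition~\ref{prop:levy}, by exploiting the strong Markov property at the instants when the reflected path touches its reflecting barrier. A useful organising principle throughout is the elementary identity $\Em_x[e^{-qT}]=1-q\,L^{(q)}\mathbf 1(x)$ for any killing time $T$ with killed resolvent $L^{(q)}$, which lets the passage-time transforms and the resolvents be cross-checked against each other.

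First I would handle $Y_t=X_t-I_t$ reflected at its infimum. Write $u(x)=\Em_x[e^{-qT_a}]$ for $x\in[0,a]$ and run $X$ until it first leaves $(0,a)$, splitting according to whether $X$ reaches $a$ before dropping below $0$. On $\{\tau_a^+<\tau_0^-\}$ the infimum has not decreased, so $Y$ hits $a$ exactly at $\tau_a^+$, contributing $\Em_x[e^{-q\tau_a^+};\tau_a^+<\tau_0^-]=W^{(q)}(x)/W^{(q)}(a)$; on the complement $X$ attains a new infimum at $\tau_0^-$, whence $Y_{\tau_0^-}=0$ and the path restarts at the barrier. Combining these with Proposition~\ref{prop:levy} and $Z^{(q)}(0)=1$ gives the renewal equation
\begin{equation*}
u(x)=\frac{W^{(q)}(x)}{W^{(q)}(a)}+\Bigl(Z^{(q)}(x)-Z^{(q)}(a)\frac{W^{(q)}(x)}{W^{(q)}(a)}\Bigr)u(0).
\end{equation*}
Setting $x=0$ yields $u(0)=1/Z^{(q)}(a)$ in the bounded-variation case, where $W^{(q)}(0)>0$, and substituting back collapses the formula to $u(x)=Z^{(q)}(x)/Z^{(q)}(a)$.

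The delicate point, and the main obstacle, is the unbounded-variation case: there $W^{(q)}(0)=0$ makes the equation at $x=0$ degenerate and leaves $u(0)$ undetermined by this decomposition alone. Here I would pin down the constant either by approximating $X$ with bounded-variation processes and passing to the limit through the continuity of scale functions, or by an excursion-theoretic computation of the rate at which excursions of $Y$ from $0$ overshoot level $a$. The same excursion machinery governs the dual $\widehat Y_t\stackrel{d}{=}S_t-X_t$ reflected at its supremum, which is genuinely harder because the barrier is now the running maximum and the relevant local time is the one accumulated at the maximum. Decomposing $\widehat Y$ started at height $x$ into its excursions away from $0$, each excursion either exceeds height $a$ (ending the game) or returns to $0$; the transform is then fixed by the excursion measure of the excursion height, and the factor $W^{(q)}(a)/W^{(q)\prime}_{+}(a)$ appears precisely as the normalising constant linking the local time at the maximum to the ladder-height process. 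Justifying the appearance of the derivative $W^{(q)\prime}_{+}(a)$ is where the real work lies, and I would follow the ladder-process/excursion route of \cite{Pistorius2004:passagetime:reflect}.

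Finally, for the resolvents I would run the identical first-exit decomposition, now testing against a bounded measurable $f$ and inserting the killed two-sided resolvent $U^{(q)}(x,\mathrm dy)$ of Proposition~\ref{prop:levy} in place of the hitting probabilities. For $Y$ this produces, after the same substitution $L^{(q)}\mathbf 1(0)$-to-$1/Z^{(q)}(a)$ step, the density $l^{(q)}(x,y)=\frac{Z^{(q)}(x)}{Z^{(q)}(a)}W^{(q)}(a-y)-W^{(q)}(x-y)$; integrating this in $y$ over $[0,a)$ and using $\int_0^a W^{(q)}=(Z^{(q)}(a)-1)/q$ recovers $\Em_x[e^{-qT_a}]=Z^{(q)}(x)/Z^{(q)}(a)$, confirming consistency. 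The dual density, including the atom at $0$ of mass proportional to $W^{(q)}(0)$ (present in the bounded-variation case, where the reflected path spends positive time at its maximum), comes from the same excursion decomposition together with the derivative normalisation, and the analogous integration against $f\equiv1$ reproduces the dual passage-time transform.
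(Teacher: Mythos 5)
A preliminary remark on the comparison itself: the paper does not prove Proposition~\ref{prop:refl} at all --- it is imported as a known result from Pistorius \cite{Pistorius2004:passagetime:reflect} --- so your attempt can only be judged on its own terms, not against an internal argument. On those terms, the first half of what you do is correct and cleanly executed: for $Y=X-I$ the process agrees with $X$ up to $\tau_0^-\wedge\tau_a^+$ and re-enters the state $0$ at $\tau_0^-$, your renewal equation follows from Proposition~\ref{prop:levy} and the strong Markov property, setting $x=0$ gives $u(0)=1/Z^{(q)}(a)$ whenever $W^{(q)}(0)>0$, and the same decomposition applied to $L^{(q)}f$ yields the stated density; the consistency check via $q\,L^{(q)}\mathbf{1}(x)=1-\Em_x\left[e^{-qT_a}\right]$ and $\int_0^a W^{(q)}(y)\,dy=(Z^{(q)}(a)-1)/q$ is also sound. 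This is the standard route in the bounded-variation case, and you correctly identify where it breaks.

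The gaps, however, are larger than the proposal concedes. First, for $Y$ in the unbounded-variation case the degeneracy cannot be repaired by continuity in the starting point: letting $x\downarrow 0$ in your renewal equation only returns $u(0+)=u(0)$, since $W^{(q)}(0+)=0$ and $Z^{(q)}(0+)=1$, so a genuinely new ingredient (a bounded-variation approximation with convergence of the reflected passage times, a martingale argument with $Z^{(q)}$, or excursion theory) is unavoidable, and none is actually carried out. Second, and more seriously, for the dual $\widehat{Y}\stackrel{d}{=}S-X$ the analogous first-return decomposition degenerates at the barrier for \emph{every} spectrally negative L\'evy process, not just in the unbounded-variation case: since $X$ has no positive jumps, $0$ is regular for $(0,\infty)$, so under $\Pm_0$ the process returns to its running maximum immediately and the equation at $x=0$ reads $v(0)=v(0)$. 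Hence there is no ``easy case'' at all for the second, third--dual identities; the entire content --- the factor $W^{(q)}(a)/W^{(q)\prime}_{+}(a)$ and the atom $W^{(q)}(0)\,W^{(q)}(a-x)/W^{(q)\prime}(a)$ at $y=0$ --- must come from the excursion/ladder analysis that you explicitly defer to \cite{Pistorius2004:passagetime:reflect}. As a self-contained proof the attempt is therefore incomplete on its hardest half; as a reduction to the literature it lands exactly where the paper itself does, namely on Pistorius' theorem, so you should either cite it (as the paper does) or carry out the excursion computation rather than gesture at it.
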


For the proofs of the main results given in Section \ref{mainresults},
we will use the following general lemma that could be of independent interest.
%
%

\begin{lem}\label{lem:key}
Let $Z=\{Z_t, t\geq0\}$ be a Markov process with the lifetime $\zeta$,
and $q$-resolvent measures and transition probabilities given, respectively, by 
\[
Q_tf(x)=\Em_{x}[f(Z_t); t<\zeta]\quad\text{and}\quad K^{(q)}f(x)=\int_0^{\infty} e^{-q t} Q_t f(x)\,dt,
\]
where $f$ is an nonnegative bounded continuous function on $\mathbb{R}$, such that $K^{(0)}f(x)<\infty$.
Let $\omega$ be a nonnegative and locally bounded function on $\mathbb{R}$ and $K^{(\omega)}$ be the $\omega$-type resolvent given by:
\begin{equation}
K^{(\omega)}f(x):=\int_0^{\infty} Q^{(\omega)}_tf(x)\,dt
\end{equation}
for
\[Q^{(\omega)}_tf(x):=\Em_{x}\left[\exp\left(-\int_0^{t} \omega(Z_s)\,ds\right)f(Z_t); t<\zeta\right].\]
Then, $K^{(\omega)}f(x)$ is finite and satisfies the following equality:
\begin{equation}\label{Komegaref}
K^{(\omega)}f(x)=\ K^{(0)}\left(f-\omega\cdot K^{(\omega)}f\right)(x).
\end{equation}
\end{lem}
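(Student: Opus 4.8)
The plan is to reduce \eqref{Komegaref} to a one-step (Duhamel) identity via the Markov property and then integrate in time. Finiteness comes for free: since $\omega\geq0$, the weight $\exp(-\int_0^t\omega(Z_s)\,ds)\leq 1$, so $Q_t^{(\omega)}f(x)\leq Q_tf(x)$ pointwise, whence $K^{(\omega)}f(x)\leq K^{(0)}f(x)<\infty$ by hypothesis. This same domination will later legitimize every interchange of integrals, because all integrands are nonnegative and bounded above by the finite quantity $K^{(0)}f$.

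The key algebraic input I would use is the elementary identity obtained by applying the fundamental theorem of calculus to $s\mapsto\exp(-\int_s^t\omega(Z_r)\,dr)$, namely
\[
\exp\left(-\int_0^t\omega(Z_s)\,ds\right)=1-\int_0^t\omega(Z_s)\exp\left(-\int_s^t\omega(Z_r)\,dr\right)\,ds.
\]
Multiplying through by $f(Z_t)\mathbf{1}_{\{t<\zeta\}}$, taking $\Em_x$, and applying Tonelli to the second term yields
\[
Q_t^{(\omega)}f(x)=Q_tf(x)-\int_0^t\Em_x\left[\omega(Z_s)\exp\left(-\int_s^t\omega(Z_r)\,dr\right)f(Z_t)\mathbf{1}_{\{t<\zeta\}}\right]\,ds.
\]

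Next I would invoke the Markov property at the deterministic time $s$. On the event $\{s<\zeta\}$ (which contains $\{t<\zeta\}$ whenever $s\leq t$), conditioning on $\mathcal{F}_s$ and using that $\int_s^t\omega(Z_r)\,dr$ is an additive functional of the post-$s$ trajectory, the inner conditional expectation identifies with $Q_{t-s}^{(\omega)}f(Z_s)$. Hence the correction term becomes $\int_0^t Q_s\bigl(\omega\cdot Q_{t-s}^{(\omega)}f\bigr)(x)\,ds$, so that
\[
Q_t^{(\omega)}f(x)=Q_tf(x)-\int_0^t Q_s\bigl(\omega\cdot Q_{t-s}^{(\omega)}f\bigr)(x)\,ds.
\]
Integrating in $t$ over $(0,\infty)$, substituting $u=t-s$, and interchanging the $s$- and $u$-integrations (justified by nonnegativity), the double integral collapses: the $u$-integral of $Q_{u}^{(\omega)}f$ reconstitutes $K^{(\omega)}f$ inside the linear operator $Q_s$, while the remaining $s$-integral of $Q_s\bigl(\omega\cdot K^{(\omega)}f\bigr)$ produces exactly $K^{(0)}\bigl(\omega\cdot K^{(\omega)}f\bigr)$. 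This gives $K^{(\omega)}f(x)=K^{(0)}f(x)-K^{(0)}\bigl(\omega\cdot K^{(\omega)}f\bigr)(x)$, which is \eqref{Komegaref} by linearity of $K^{(0)}$.

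The step I expect to require the most care is the invocation of the Markov property in the presence of the lifetime $\zeta$ and the multiplicative functional $\exp(-\int_s^t\omega(Z_r)\,dr)$: one must verify that, restricted to $\{s<\zeta\}$, the shifted trajectory together with $Z_s$ reconstitutes a copy of the process started afresh at $Z_s$, so that the conditional expectation genuinely equals $Q_{t-s}^{(\omega)}f(Z_s)$. Once this is secured, the remaining manipulations are routine Tonelli interchanges rendered safe by the nonnegativity of $f$, $\omega$, and the exponential weight, together with the uniform bound $K^{(\omega)}f\leq K^{(0)}f<\infty$ established at the outset.
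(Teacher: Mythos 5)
Your proof is correct, but it takes a genuinely different route from the paper's. The paper proves the lemma with its signature probabilistic device: it first assumes $\omega$ bounded by some $\lambda$, realizes the $\omega$-killing by thinning a marked Poisson process of intensity $\lambda$ independent of $Z$, conditions on the first mark to obtain the recursion $K^{(\omega)}f = K^{(\lambda)}f + K^{(\lambda)}\bigl((\lambda-\omega)K^{(\omega)}f\bigr)$, eliminates $K^{(\lambda)}$ through the resolvent identity $\lambda K^{(0)}\circ K^{(\lambda)} = K^{(0)}-K^{(\lambda)}$, and finally removes the boundedness of $\omega$ by a limiting argument. You instead work purely analytically: the Duhamel identity $\exp\bigl(-\int_0^t\omega(Z_s)\,ds\bigr) = 1 - \int_0^t\omega(Z_s)\exp\bigl(-\int_s^t\omega(Z_r)\,dr\bigr)\,ds$, the simple Markov property at the deterministic time $s$ (valid on $\{s<\zeta\}$ exactly as you describe), and Tonelli give $Q_t^{(\omega)}f = Q_tf - \int_0^t Q_s\bigl(\omega\cdot Q_{t-s}^{(\omega)}f\bigr)\,ds$, and integration in $t$ finishes. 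Your route is shorter and needs neither the auxiliary randomization, nor the resolvent identity, nor the truncation step---it handles locally bounded (indeed any nonnegative measurable) $\omega$ in one pass, with every interchange justified by nonnegativity and the bound $K^{(\omega)}f\leq K^{(0)}f<\infty$. What the paper's route buys is coherence with the rest of the paper: the marked Poisson construction is the advertised main method and reappears almost verbatim in the proof of Theorem \ref{thm:main}, so the lemma's proof there doubles as a template for the later arguments. One point worth stating explicitly in your final step: the finiteness of $K^{(0)}\bigl(\omega\cdot K^{(\omega)}f\bigr)(x)$---needed to split $K^{(0)}\bigl(f-\omega\cdot K^{(\omega)}f\bigr)$ by linearity, since this function may be signed---follows from the identity you derive, because $K^{(\omega)}f(x)$ and the correction term are nonnegative and sum to $K^{(0)}f(x)<\infty$.
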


\begin{proof}
The finiteness of $K^{(\omega)}f(x)$ comes from the fact that $K^{(\omega)}f(x)\leq K^{(0)}f(x)$.
Assume at the beginning that $\omega$ is bounded by some $\lambda>0$.
The arguments used here are similar to the arguments for the proof of Theorem \ref{thm:main} proved later.
Let $E=\{E_t, t\geq 0\}$ be a Poisson point process with a characteristic measure $\frac{1}{\lambda}\mathbf{1}_{(0,\lambda]}(y) dy \lambda dt$.
That is, $E=\{(T_i, M_i), i=1,2,\ldots\}$ is a (marked) Poisson jump process with jump intensity $\lambda$,
jump epochs $T_i$ and marks $M_i$ being uniformly distributed on $[0,\lambda]$.
Moreover, we define $E$ on a common probability space with $X$ and we construct $E$ to be independent of $X$.
Then
\begin{align*}
Q^{(\omega)}_tf(x)
=&\ \Em_x\left[f(Z_t); t<\zeta \text{\ and\ } M_i>\omega(Z_{T_i}) \text{\ for all\ }T_i<t\right]\\
=&\  \Em_x\left[f(Z_t); t<\zeta, T_1>t \right] + \int_0^t\lambda e^{-\lambda s} \Em_x\left[
Q^{(\omega)}_{t-s}f(Z_{s}), M_1>\omega(Z_s)\right]\,ds\non\\
=&\  \Em_x\left[e^{-\lambda t}f(Z_t); t<\zeta \right] + \int_0^t\lambda e^{-\lambda s} \Em_x\left[
\frac{(\lambda-\omega(Z_s))}{\lambda}Q^{(\omega)}_{t-s}f(Z_{s})\right]\,ds\non\\
=&\ Q_t^{(\lambda)}f(x)+ \int_0^{t} Q^{(\lambda)}_{s}\left((\lambda-\omega)Q^{(\omega)}_{t-s}f\right)(x)\,ds,
\end{align*}
where the superscript $\lambda$ denotes a counterpart for fixed $\omega(x)=\lambda$.
Thus
\begin{equation}
K^{(\omega)}f(x)=\int_0^{\infty} Q^{(\omega)}_tf(x)\,dt= K^{(\lambda)}f(x)+ K^{(\lambda)}\left((\lambda-\omega) K^{(\omega)}f\right)(x).\label{eqn:lem:inproof:2}
\end{equation}
Using the resolvent identity $\lambda K^{(0)}\circ K^{(\lambda)}=K^{(0)}-K^{(\lambda)}$, we have
\begin{align}
&\hspace{-0.5cm}\lambda K^{(0)}\left(K^{(\omega)}f\right)(x)\non\\
=&\ \lambda K^{(0)}\circ K^{(\lambda)}f(x)+ \lambda K^{(0)}\circ K^{(\lambda)}((\lambda-\omega)K^{(\omega)}f)(x)\non\\
=&\ (K^{(0)}-K^{(\lambda)})f(x)+ (K^{(0)}-K^{(\lambda)})((\lambda-\omega)K^{(\omega)}f)(x). \label{eqn:lem:inproof:3}
\end{align}
Comparing \eqref{eqn:lem:inproof:2} with \eqref{eqn:lem:inproof:3} completes the proof for the case of bounded $\omega$.
The general case follows from the classical limiting arguments applied in \eqref{Komegaref}.
\end{proof}

We will now prove Lemma \ref{lem:equation}, that is that equation \eqref{lem:eqn:1}:
\begin{equation*}
H(x)=h(x)+ \int_0^{x} W(x-y) \omega(y) H(y)\,dy
\end{equation*}
admits a unique locally bounded solution $H(x)$. The method of the proof
is motivated by the proofs of other renewal-type equations.

\subsection{Proof of Lemma \ref{lem:equation}}
In this proof we follow the standard idea behind the proof of
the renewal theorem.
For $x<0$ the statement is obvious so let us consider only that $x\geq 0$. Note that,
it suffices to prove the existence of the solution of equation
(\ref{lem:eqn:1}) on $[0,x_{0}]$, for any fixed $x_{0}>0$,
since an increase of $x_0$ must produce the same solution. In fact, the solution
does not depend on $x_0$.

Let $\varpi$ be an upper bound of $|\omega(y)|$ on $[0,x_0]$ and $s_0$ satisfies $\psi(s_0)>2\varpi$.
To prove the uniqueness of solution of \eqref{lem:eqn:1}, we will show that $H(x)=0$ is the only solution of
\[
H(x)= \int_0^{x} W(x-y)\omega(y)H(y)\,dy, \quad \forall x\in[0,x_{0}].
\]
First, note that
\[
e^{-s_0 x}|H(x)|\leq\ \int_0^{x} e^{-s_0 (x-y)}|H(x-y)| \left( \varpi e^{-s_{0}\cdot y} W(y)\right) \,dy.
\]
Then, since $\D \int_0^{\infty} \varpi e^{-s_0 y} W(y)\,dy=\frac{\varpi}{\psi(s_0)}<\frac{1}{2}$,
the classical renewal-type arguments (see e.g. \cite[Thm.~5.2.4, p.~146]{Apq}) give
$e^{-s_0 x}|H(x)|= 0$ on $[0,x_0]$ which proves the uniqueness.

Now, let us introduce $\{ \mathcal{G}^{(n)}, H_n\}_{n\geq 1}$ in the following way:
\begin{align}
&\ \mathcal{G}f(x)=\mathcal{G}^{(1)}f(x)=\int_0^{x} e^{-s_0 (x-y)}W(x-y)\omega(y)f(y)\,dy, \quad  \mathcal{G}^{(n)}f(x)=\mathcal{G}(\mathcal{G}^{(n-1)}f)(x),\nonumber\\
&\ H_0(x)=e^{-s_0 x} h(x),\quad  H_{n+1}(x)=H_0(x)+ \mathcal{G}H_n(x).\label{reccH}
\end{align}
Then, $\mathcal{G}$ is a linear operator such that $\D |\mathcal{G}f(x)|\leq \frac{1}{2} \sup_{y\in [0,x_0]} |f(y)|$ for $x\in[0,x_0]$.
Thus, for $m>n$, we  have
\[
|H_m(x)-H_n(x)|= |\sum_{k=n+1}^{m}\mathcal{G}^{(k)}H_0(x)|\leq 2^{-n} \sup_{y\in[0, x_0]} |H_0(y)|.
\]
Therefore, $\{( H_n(x), x\in [0, x_0]) \}_{n\geq 0}$ is a Cauchy sequence which admits a limit $\tilde{H}$ on $[0,x_0]$ satisfying:
\[
\tilde{H}(x)=H_0(x)+ \mathcal{G}\tilde{H}(x)= e^{-s_0 x} h(x)+ \int_0^{x} e^{-s_0 (x-y)} W(x-y)\omega(y) \tilde{H}(y)\,dy.
\]
It follows that $H(x)=e^{s_0 x} \tilde{H}(x)$ is the desired function satisfying \eqref{lem:eqn:1}.

To prove (\ref{lem:eqn:2}) we convolute both sides of \eqref{lem:eqn:1} with $W^{(\delta)}$ to get:
\begin{align*}
\delta W^{(\delta)}*H(x)&\ = \delta h*W^{(\delta)}(x)+ \delta (\omega H)*W*W^{(\delta)}(x)\\
&\ = \delta (h*W^{(\delta)})(x)+ (\omega H)*(W^{(\delta)}-W)(x),
\end{align*}
where we used the identity $W^{(\delta)}-W=\delta W^{(\delta)}*W$, given in (\ref{falka}).
Using \eqref{lem:eqn:1} we obtain
\[
H(x)=h(x)+ \delta h*W^{(\delta)}(x)+ \left((\omega-\delta)H\right)*W^{(\delta)}(x),
\]
which completes the proof of the second assertion of the lemma.
Finally, the monotonicity of the solution $H$ with respect to the function $\omega$
follows from the iterative construction (\ref{reccH}) of $H$.

\halmos

\section{Proofs of exit identities}\label{Proofs}

\subsection{Proof of Theorem \ref{thm:main}: formula \eqref{ans:A}}
For $x<0$ the statement is straightforward since
$\mathcal{W}^{(\omega)}(x)=W(x)=0$.
For $x\geq 0$, applying now the Markov property of $X$ at $\tau_y^{+}$ and using the fact that
$X$ has no positive jumps, we have
\begin{equation}
\mathcal{A}(x,z)= \mathcal{A}(x,y) \mathcal{A}(y,z),\label{relation:Markov}
\end{equation}
for all $z\geq y\geq x\geq0$.

We will use the same crucial idea of constructing of particular Poisson process introduced already in the proof of Lemma \ref{lem:key}.
At the beginning let $\omega$ be a bounded function and $\lambda$ be its arbitrary upper bound.
Define $E=\{E_t, t\geq 0\}$ to be a Poisson point process with a characteristic measure $\frac{1}{\lambda}\mathbf{1}_{(0,\lambda]}(y) dy \lambda dt$.
That is, $E=\{(T_i, M_i), i=1,2,\ldots\}$ is a (marked) Poisson jump process with jump intensity $\lambda$,
jump epochs $T_i$ and marks $M_i$ being uniformly distributed on $[0,\lambda]$.
Moreover, we define $E$ on a common probability space with $X$ and we construct $E$ to be independent of $X$.
The crucial observation is that:
\begin{equation}
\mathcal{A}(x,c)=\Pm_x\left[\left\{
M_i>\omega(X_{T_i})\quad \text{for all}\quad T_i< \tau_c^{+}\right\}\cap \left\{\tau_c^{+}<\tau_0^{-}\right\}\right].
\end{equation}
Then, by Proposition \ref{prop:levy} and
the Markov property of $X$
we have:
\begin{align*}
\mathcal{A}(x, c)=
&\  \Pm_x(T_1>\tau_c^{+}, \tau_c^{+}<\tau_0^{-})+ \Em_x\left[\mathcal{A}(X_{T_1}, c); T_1<\tau_0^{-}\wedge\tau_c^{+}, M_1>\omega(X_{T_1})\right]\non\\
=&\  \frac{W^{(\lambda)}(x)}{W^{(\lambda)}(c)}+ \int_0^{c} \left(\frac{W^{(\lambda)}(x) W^{(\lambda)}(c-y)}{W^{(\lambda)}(c)} - W^{(\lambda)}(x-y)\right) (\lambda-\omega(y))\mathcal{A}(y, c)\,dy\non\\
=&\  \frac{W^{(\lambda)}(x)}{W^{(\lambda)}(c)}\left(1+ \int_0^{c} W^{(\lambda)}(c-y)(\lambda-\omega(y))\mathcal{A}(y,c)\,dy\right)\\
&\qquad - \int_0^{x} W^{(\lambda)}(x-y)(\lambda-\omega(y))\mathcal{A}(y,c)\,dy.
\end{align*}
Re-arranging the above equality and applying relation \eqref{relation:Markov} give:
\begin{align}
&\ \mathcal{A}(x, c)\left(1+ \int_{0}^{x} W^{(\lambda)}(x-y)(\lambda-\omega(y)) \mathcal{A}(y, x) \,dy \right)\non\\
=&\ \frac{W^{(\lambda)}(x)}{W^{(\lambda)}(c)}\left(1+ \int_0^{c} W^{(\lambda)}(c-y)(\lambda-\omega(y))\mathcal{A}(y,c)\,dy\right).\non
\end{align}
Now, taking
\begin{equation}\label{defwnowa}
\mathcal{W}^{(\omega)}(x)= W^{(\lambda)}(x)\left(1+ \int_{0}^{x} W^{(\lambda)}(x-y)(\lambda-\omega(y)) \mathcal{A}(y, x) \,dy \right)^{-1}\end{equation}
gives the required identity $\mathcal{A}(x, c)=\mathcal{W}^{(\omega)}(x)/\mathcal{W}^{(\omega)}(c)$ for all $0\leq x\leq c$.

Replacing $\mathcal{A}(y,x)$ by $\mathcal{W}^{(\omega)}(y)/\mathcal{W}^{(\omega)}(x)$ in \eqref{defwnowa} will produce:
\begin{align}
W^{(\lambda)}(x)=&\ \mathcal{W}^{(\omega)}(x)\left(1+ \int_0^{x} W^{(\lambda)}(x-y)(\lambda-\omega(y)) \frac{\mathcal{W}^{(\omega)}(y)}{\mathcal{W}^{(\omega)}(x)}\,dy\right)\non\\
=&\ \mathcal{W}^{(\omega)}(x)+ \int_0^{x} W^{(\lambda)}(x-y) (\lambda-\omega(y))\mathcal{W}^{(\omega)}(y)\,dy,\non
\end{align}
which by applying Lemma \ref{lem:equation} and identity (\ref{falka}) shows that $\mathcal{W}^{(\omega)}(x)$,
given in (\ref{defwnowa}), satisfies equation (\ref{eqn:Ww}).

Now, let $\omega$ be a general locally bounded function. Note that from Lemma \ref{lem:equation}
one can conclude that
$\mathcal{W}^{(\omega)}$ is an increasing functional of $\omega$, that is if $\omega_1(x)\geq \omega_2(x)$ for all $x\geq0$ then $\mathcal{W}^{(\omega_1)}(x)\geq \mathcal{W}^{(\omega_2)}(x)$.
Let $\Psi>0$, $\omega_\Psi(x)=\omega(x)\wedge \Psi$ and denote
\[
\mathcal{A}_\Psi(x,b)=\Em_x\left[\exp\left(-\int_0^{\tau_c^{+}} \omega_\Psi(X_t)\,dt\right); \tau_c^{+}<\tau_0^{-}\right].
\]
Then, for $\mathcal{W}^{(\omega_{\Psi})}(x)$ satisfying
\[
\mathcal{W}^{(\omega_\Psi)}(x)= W(x)+ \int_0^{x} W(x-y) \omega_\Psi(y) \mathcal{W}^{(\omega_{\Psi})}(y)\,dy
\]
we have:
\[
\mathcal{A}_\Psi(x, b)=\frac{\mathcal{W}^{(\omega_\Psi)}(x)}{\mathcal{W}^{(\omega_\Psi)}(b)}.
\]
Finally, letting $\Psi\rightarrow \infty$ and using the monotone convergence theorem
completes the proof of Theorem \ref{thm:main}, eq. \eqref{ans:A}.

\halmos

\subsection{Proof of Theorem \ref{thm:main}: formula \eqref{ans:B}}
For $x<0$ the statement is straightforward since $\mathcal{W}^{(\omega)}(x)=0$ and $\mathcal{Z}^{(\omega)}(x)=1$.
For $x\geq 0$, applying the Markov property at $\tau_c^{+}$ gives:
\begin{equation}\label{B}
\mathcal{B}(x, c)=B(x)- B(c) \frac{\mathcal{W}^{(\omega)}(x)}{\mathcal{W}^{(\omega)}(c)},
\end{equation}
where
\begin{equation}
B(x):=\Em_x\left[\exp\left(-\int_0^{\tau_{0}^{-}} \omega(X_t)\,dt\right)\right].
\end{equation}
As far as $\mathcal{B}(x,c)$ is concerned, only $\omega$ on $[0,c]$ matters. However, without loss of generality we can assume
that $\omega$ is well defined also on $(c,\infty)$. Then
\begin{align*}
1- B(x)
=&\ \Em_{x}\left[\int_0^{\tau_{0}^{-}} \omega(X_t) \exp\left(-\int_0^{t} \omega(X_s)\,ds\right)\,dt\right]\\
=&\ \int_0^{\infty} \Em_{x}\left[ \omega(X_t)\exp\left(-\int_0^{t} \omega(X_s)\,ds\right); t<\tau_0^{-}\right]\,dt\\
=&\ \int_0^{\infty}\left[\omega(y)-\omega(y)(1-B(y))\right] \left(e^{-\Phi(0) y} W(x)-W(x-y)\right)\,dy,
\end{align*}
where Lemma \ref{lem:key} and Proposition \ref{prop:levy} were applied in the last line. Then, straightforward calculations show
that $B$ satisfies the following equation:
\begin{equation}
B(x)= 1- c_B W(x)+ \int_0^{x} W(x-y)\omega(y) B(y)\,dy,
\end{equation}
for $\D c_B=\int_0^{\infty} e^{-\Phi(0) y} \omega(y) B(y)\,dy$.
Recalling that $\mathcal{W}^{(\omega)}$ and $\mathcal{Z}^{(\omega)}$ are the unique locally bounded solutions to
equations (\ref{eqn:Ww}) and (\ref{eqn:Zw}), respectively,
one can conclude that $B(x)=\mathcal{Z}^{(\omega)}(x)- c_B \mathcal{W}^{(\omega)}(x)$ and thus,
from equation (\ref{B}) it follows that
\begin{equation}
\mathcal{B}(x, c)=\mathcal{Z}^{(\omega)}(x)- \mathcal{Z}^{(\omega)}(c)\frac{\mathcal{W}^{(\omega)}(x)}{\mathcal{W}^{(\omega)}(c)},
\end{equation}
which completes the proof.

\halmos

\subsection{Proof of Corollary \ref{cor:one:down}}
We will prove only the second, nontrivial part of the statement.

If $c_{\mathcal{W}^{-1}(\infty)}^{-1}=\lim_{x\to\infty}\mathcal{W}^{(\omega)}(x):=\mathcal{W}^{(\omega)}(\infty)<\infty$, then $\Pm_{x}\left(\tau_0^{-}=\infty\right)\geq c_{\mathcal{W}^{-1}(\infty)}
\mathcal{W}^{(\omega)}(x)>0$ for any $x>0$. Thus,
$\lim_{x\to\infty}W(x):=W(\infty)<\infty$.
Recalling that $\mathcal{W}^{(\omega)}(x)$ satisfies
\begin{equation}
\mathcal{W}^{(\omega)}(x)= W(x)+ \int_0^{x} W(x-y) \omega(y) \mathcal{W}^{(\omega)}(y)\,dy
\end{equation}
and that $W(\cdot)$ is a monotone function, we can conclude that $\D \int_0^{\infty} \omega(y) \mathcal{W}^{(\omega)}(y)\,dy<\infty$.
From the monotonicity of $\mathcal{W}^{(\omega)}(\cdot)$, observed in the formula \eqref{ans:A},
it follows that $\D \int_{0}^{\infty} \omega(y)\,dy<\infty$.

On the other hand, if $\D \int_0^{\infty} \omega(y)\,dy<\infty$ and $W(\infty)<\infty$ (since $X_t\to \infty$ a.s.), then
\begin{equation}
\mathcal{W}^{(\omega)}(x)\leq W(\infty)+ W(\infty)\int_{0}^{x} \omega(y) \mathcal{W}^{(\omega)}(y)\,dy
\end{equation}
and the Gronwall's inequality gives:
\begin{equation}
\mathcal{W}^{(\omega)}(x)\leq  W(\infty) \exp\left(W(\infty) \int_0^{x} \omega(y)\,dy\right).
\end{equation}
This inequality shows the finiteness of $\D \lim_{x\to\infty} \mathcal{W}^{(\omega)}(x)$ and completes the proof.

\halmos

\subsection{Proof of Theorem \ref{thm:refl}: formula \eqref{ans:C}}
Applying the Markov property of $Y$ at $T_y$ and using fact that $Y$ is absent of positive jumps, we have
\begin{equation}
\mathcal{C}(x,z)=\mathcal{C}(x,y)\cdot \mathcal{C}(y,z),\label{relation:Markov:refl}
\end{equation}
for all $z\geq y\geq x\geq 0$.
Now, for $x\leq c$ we have
\begin{align*}
1-\mathcal{C}(x,c)=&\ \Em_{x}\left[1-\exp\left(-\int_0^{T_c} \omega(Y_t)\,dt\right)\right]\\
=&\ \int_0^{\infty} \Em_{x}\left[\omega(Y_t) \exp\left(-\int_0^{t}\omega(Y_s)\,ds\right); t<T_c\right]\,dt,
\end{align*}
which after applying \ref{lem:key}, Proposition \ref{prop:refl} and using relation \eqref{relation:Markov:refl}, gives
\begin{align}
\mathcal{C}(x,c)=&1- \int_0^{c} \omega(y) \mathcal{C}(y,c) \left( W(c-y)- W(x-y)\right)\,dy\nonumber\\
=& 1- \int_0^{c} W(c-y) \omega(y)\mathcal{C}(y,c)\,dy + \mathcal{C}(x,c) \int_0^{x} W(x-y) \omega(y)\mathcal{C}(y,x)\,dy.\label{2}
\end{align}
Thus, taking
\begin{equation}\label{3}
\mathcal{Z}^{(\omega)}(x)=\left(1- \int_0^{x} W(x-y)\omega(y)\mathcal{C}(y,x)\,dy\right)^{-1},
\end{equation}
gives the desired identity $\mathcal{C}(x,y)=\mathcal{Z}^{(\omega)}(x)/\mathcal{Z}^{(\omega)}(y)$ for $0\leq x\leq y$.

Finally, replacing $\mathcal{C}(y,x)$ by $\mathcal{Z}^{(\omega)}(y)/\mathcal{Z}^{(\omega)}(x)$ in \eqref{3} will produce equation
\eqref{eqn:Zw} for $\mathcal{Z}^{(\omega)}(\cdot)$ which completes the proof.

\halmos

For the proofs concerning the dual process $\widehat{Y}$, the derivatives of the
scale functions are needed, as well as those of the new scale functions.
Recalling the definition of $\mathcal{W}^{(\omega)}(x,z)$ in \eqref{eqn:Ww:xy}, we note that for $x> y$:
\begin{eqnarray}
\frac{\partial}{\partial x} \mathcal{W}^{(\omega)}(x, y)&=&\ W'(x- y)+ W(0)\omega(x) \mathcal{W}^{(\omega)}(x, y)
+ \int_{y}^{x} W'(x-z)\omega(z) \mathcal{W}^{(\omega)}(z,y)\,dz\non\\
&=&\ W'(x-y)+ \int_{0-}^{x-y} \omega(x-z) \mathcal{W}^{(\omega)}(x-z, y) W(dz).\label{eqn:Wf:dif}
\end{eqnarray}
Similarly, for $x>0$,
\begin{align}
\mathcal{W}^{(\omega)\prime}(x)=&\ W'(x)+ \int_{0-}^{x} \omega(x-y)\mathcal{W}^{(\omega)}(x-y)W(dy),\label{eqn:Ww:dif}\\
\mathcal{Z}^{(\omega)\prime}(x)=&\ \int_{0-}^{x} \omega(x-y)\mathcal{Z}^{(\omega)}(x-y) W(dy).\label{eqn:Zw:dif}
\end{align}

\subsection{Proof of Theorem \ref{thm:refl}: formula \eqref{ans:C:dual}}
From Lemma \ref{lem:key} and the Fubini theorem we have:
\begin{eqnarray*}
1- \widehat{\mathcal{C}}(x,c)
&=& \widehat{\Em}_{x}\left[ \int_0^{T_{c}} \omega(c- Y_t)\exp\left(-\int_0^{t} \omega(c- Y_s)\,ds\right)\,dt\right]\\
&=& \int_0^{\infty} \widehat{\Em}_{x}\left[ \omega(c-{Y}_t)\exp\left(-\int_0^{t} \omega(c- {Y}_s)\,ds\right); t< {T}_c\right]\,dt\\
&=& \widehat{L} \left( \omega(c-\cdot)- \omega(c-\cdot)(1-\widehat{\mathcal{C}}(\cdot, c))\right)(x)\\
&=& \int_{[0,c]} \omega(c-y) \widehat{\mathcal{C}}(y,c)\left(\frac{W(c-x)}{W'(c)}W(dy)- W(y-x)\,dy\right).
\end{eqnarray*}
Then, a change of of variable formula produces:
\begin{align*}
\widehat{\mathcal{C}}(c-x,c)=&\ 1- \int_{[0,c]} \omega(c-y) \widehat{\mathcal{C}}(y,c) \left(\frac{W(x)}{W'(c)}W(dy)- W(y+x-c)\,dy\right)\\
=&\ 1- c_{\widehat{C}}\cdot W(x)+ \int_0^{x} W(x-y) \omega(y)\widehat{\mathcal{C}}(c-y,c)\,dy, \label{eqn:C:dual:inproof:1}
\end{align*}
where
\begin{equation}
c_{\widehat{C}}=\frac{1}{W'(c)} \int_{[0,c]} \omega(c-y) \widehat{\mathcal{C}}(y,c) W(dy). \label{eqn:C:dual:inproof:2}
\end{equation}
Finally, using the definitions of $\mathcal{W}^{(\omega)}(\cdot)$, $\mathcal{Z}^{(\omega)}(\cdot)$ given in \eqref{eqn:Ww} and \eqref{eqn:Zw}, respectively, we have:
\begin{equation}
\widehat{\mathcal{C}}(c-x,c)=\mathcal{Z}^{(\omega)}(x)- c_{\widehat{C}}\mathcal{W}^{(\omega)}(x), \label{eqn:C:dual:inproof:1}
\end{equation}
which together with equation \eqref{eqn:C:dual:inproof:2} gives:
\begin{equation}
c_{\widehat{C}}= \frac{\int_{0-}^{c}\omega(c-y)\mathcal{Z}^{(\omega)}(c-y) W(dy)}{W'(c)+ \int_{0-}^{c} \omega(c-y)\mathcal{W}^{(\omega)}(c-y) W(dy)}=\frac{\mathcal{Z}^{(\omega)\prime}(c)}{\mathcal{W}^{(\omega)\prime}(c)}.
\end{equation}
This completes the proof.

\halmos

\subsection{Proof of Theorem \ref{cor:one:up}}
We will prove that
\begin{equation}\label{dodgranica}
\lim_{\gamma\to\infty}\frac{\mathcal{W}^{(\omega)}(x,-\gamma)}{\mathcal{W}^{(\omega)}(c,-\gamma)}=\frac{\mathcal{H}^{(\omega)}(x)}{\mathcal{H}^{(\omega)}(c)}.
\end{equation}
Then, \eqref{eqn:cor:1} will follow from Corollary \ref{cor:2}. Moreover,
\eqref{Xi} will follow from
Theorem \ref{thm:resolvent} since for $x,y\in(-\gamma,c]$,
\begin{eqnarray*}
\lefteqn{\int_0^{\infty} \Em_{x}\left[\exp\left(-\int_0^{t} \omega(X_s)\,ds\right); X_t\in\,dy, t<\tau_{-\gamma}^{-}\wedge\tau_{c}^{+}\right]\,dt}\\&&=
\left(\frac{\mathcal{W}^{(\omega)}(x,-\gamma)}{\mathcal{W}^{(\omega)}(c,-\gamma)} \mathcal{W}^{(\omega)}(c,y)- \mathcal{W}^{(\omega)}(x, y)\right)\,dy.\label{eqn:cor:inproof:1}
\end{eqnarray*}

To prove \eqref{dodgranica} first note that,
by Remark \ref{rmk:1} for $x\geq -\gamma$, we have:
\[
\mathcal{W}^{(\omega)}(x,-\gamma)= W^{(\phi)}(x+\gamma)+ \int_{0}^{x} W^{(\phi)}(x-z)(\omega(z)-\phi) \mathcal{W}^{(\omega)}(z, -\gamma)\,dz\label{eqn:cor:inproof:2}
\]
and observe also that for $u\in(-\gamma,0]$,
\[e^{-\Phi(\phi) \gamma} \mathcal{W}^{(\omega)}(u, -\gamma)
= e^{-\Phi(\phi) \gamma} W^{(\phi)}(u+\gamma),\]
where, by \cite[Chap. 9]{AndreasGerbershiubook}, we have 
\[\lim_{\gamma\to+\infty}e^{-\Phi(\phi) \gamma} W^{(\phi)}(u+\gamma)=\Phi'(\phi) e^{\Phi(\phi)u}.
\]
Moreover, for $x>0$, from Corollary \ref{cor:2} we have
\begin{equation}
\frac{\mathcal{W}^{(\omega)}(0,-\gamma)}{\mathcal{W}^{(\omega)}(x,-\gamma)}=\Em \left[\exp\left(-\int_0^{\tau_{x}^{+}} \omega(X_s)\,ds\right); \tau_{x}^{+}<\tau_{-\gamma}^{-}\right].
\end{equation}
This quantity is increasing with respect to $\gamma$ and hence the limit
\[
\lim_{\gamma\to+\infty} e^{-\Phi(\phi) \gamma} \mathcal{W}^{(\omega)}(x, -\gamma)= \Phi'(\phi) \left(\Em \left[\exp\left(-\int_0^{\tau_{x}^{+}} \omega(X_s)\,ds\right); \tau_{x}^{+}<\infty\right]\right)^{-1}
\]
is well-defined and finite for every $x\geq-\gamma$.
Finally, taking
\begin{equation}
\mathcal{H}^{(\omega)}(x)= \Phi'(\phi)^{-1}\lim_{\gamma\to\infty} e^{-\Phi(\phi) \gamma} \mathcal{W}^{(\omega)}(x,-\gamma)
\end{equation}
completes the proof of \eqref{dodgranica}. In order to show the above form for 
$\mathcal{H}^{(\omega)}(x)$ satisfies equation
\eqref{eqn:cor:H} note that
\begin{eqnarray*}\lefteqn{
e^{-\Phi(\phi) \gamma}\mathcal{W}^{(\omega)}(x, -\gamma)}\\&&=
e^{-\Phi(\phi) \gamma}\left(W^{(\phi)}(x+\gamma)+ \int_0^{x} W^{(\phi)}(x-y)(\omega(y)-\phi)\mathcal{W}^{(\omega)}(y, -\gamma)\,dy\right).
\end{eqnarray*}
Then, by taking the limit $\gamma \to\infty$ and applying the dominated convergence the result follows.

\halmos


\section{Proofs of resolvent identities}\label{Proofs: res}

\subsection{Proof of Theorem \ref{thm:resolvent}}
Let $f$ be a nonnegative bounded continuous function on $\mathbb{R}^{+}\cup\{0\}$.
Applying Lemma \ref{lem:key} and Proposition \ref{prop:levy} gives:
\begin{align*}
U^{(\omega)}f(x):=&\ \int_0^{\infty} \Em_{x}\left[f(X_t) \exp\left(-\int_0^{t} \omega(X_s)\,ds\right); t \leq \tau_0^{-}\wedge\tau_{c}^{+}\right]\,dt\\
=&\ \int_0^{c} \left(f(y)- \omega(y) U^{(\omega)}f(y)\right)U^{(0)}(x,dy)\\
=&\ c_{U}\cdot W(x)- W*f(x)+ \int_0^{x} W(x-y) \omega(y) U^{(\omega)}f(y)\,dy,
\end{align*}
where
\begin{equation}
c_{U}=\int_0^{c} \frac{W(c-y)}{W(c)} (f(y)- \omega(y) U^{(\omega)}f(y))\,dy. \label{eqn:U:inproof:1}
\end{equation}
We define the operator
\begin{equation}\label{mathcalR}
\mathcal{R}^{(\omega)}f(x):=\int_0^{x} f(y) \mathcal{W}^{(\omega)}(x, y)\,dy,\qquad x> 0,
\end{equation}
with $\mathcal{R}^{(\omega)}f(x)=0$ for $x\leq 0$. Then, the following identity holds true:
\begin{equation}\label{rmk:2}
\mathcal{R}^{(\omega)}f(x)= W*f(x)+ \int_0^{x} W(x-y)\omega(y) \mathcal{R}^{(\omega)}f(y)\,dy.
\end{equation}
Finally, from Lemma \ref{lem:equation} and equation \eqref{rmk:2} one can then conclude that
\[
U^{(\omega)}f(x)=c_{U}\cdot \mathcal{W}^{(\omega)}(x)- \mathcal{R}^{(\omega)}f(x)
\]
which together with the boundary condition $U^{(\omega)}f(c)=0$ gives formula \eqref{thm:resolvent:U}.

\halmos

\subsection{Proof of Theorem \ref{thm:resolv:refl}: resolvent $L^{(\omega)}$}
Let $f$ be a nonnegative bounded continuous function on $\mathbb{R}^{+}$. Applying Lemma \ref{lem:key} and Proposition \ref{prop:refl} gives
\begin{eqnarray*}
L^{(\omega)}f(x)&=& L\left(f-\omega L^{(\omega)}f\right)(x)\\
&=& \int_0^{c} (f(y)-\omega(y) L^{(\omega)}f(y)) W(c-y)\,dy- \int_0^{x} (f(y)-\omega(y) L^{(\omega)}f(y)) W(x-y)\,dy\\
&=& \int_0^{x} W(x-y)\omega(y) L^{(\omega)}f(y)\,dy- W*f(x)+ c_L,
\end{eqnarray*}
for some constant $c_L$. Thus, from Lemma \ref{lem:equation} and the definition of
the $\omega$-scale functions given in
\eqref{eqn:Zw} and \eqref{eqn:Ww:xy}, we have:
\[
L^{(\omega)}f(x)=c_L \cdot \mathcal{Z}^{(\omega)}(x)- \mathcal{R}^{(\omega)}f(x),
\]
where the operator $\mathcal{R}^{(\omega)}$ is defined in (\ref{mathcalR}).
Finally, employing the boundary condition $L^{(\omega)}f(c)=0$, we get
\begin{eqnarray*}
L^{(\omega)}f(x)&=& \frac{\mathcal{Z}^{(\omega)}(x)}{\mathcal{Z}^{(\omega)}(c)} \mathcal{R}^{(\omega)}f(c)- \mathcal{R}^{(\omega)}f(x)\\&=& \int_{[0,c]} f(y) \left(\frac{\mathcal{Z}^{(\omega)}(x)}{\mathcal{Z}^{(\omega)}(c)} \mathcal{W}^{(\omega)}(c,y)- \mathcal{W}^{(\omega)}(x,y)\right)\,dy,
\end{eqnarray*}
which completes the proof.

\halmos

\subsection{Proof of Theorem \ref{thm:resolv:refl}: resolvent $\widehat{L}^{(\omega)}$} Applying Lemma \ref{lem:key} and Proposition \ref{prop:refl} gives:
\begin{align*}
\widehat{L}^{(\omega)}f(x)=&\ \int_0^{\infty} \Em_{-x}\left[\exp\left(-\int_0^{t} \omega(c-\widehat{Y}_s)\,ds\right) f(\widehat{Y}_t); t<\widehat{T}_c\right]\,dt\\
=&\ \widehat{L}\left(f(\cdot)- \omega(c-\cdot) \widehat{L}^{(\omega)}f(\cdot)\right)(x)\\
=&\ \int_{0-}^{c} \left(f(y)-\omega(c-y) \widehat{L}^{(\omega)}f(y)\right) \left(\frac{W(c-x)}{W'(c)}W(dy)- W(y-x)\,dy\right)\\
=&\ c_{\widehat{L}} W(c-x)+ \int_0^{c} W(y-x) \omega(c-y)\widehat{L}^{(\omega)}f(y)\,dy- \int_0^{c} f(y) W(y-x)\,dy,
\end{align*}
where
\begin{equation}
c_{\widehat{L}}=W'(c)^{-1}\int_{0-}^{c} \left(f(y)-\omega(c-y) \widehat{L}^{(\omega)}f(y)\right) W(dy). \label{eqn:hL:inproof:2}
\end{equation}
Now, a change of variables formula produces:
\[
\widehat{L}^{(\omega)}f(c-x)= c_{\widehat{L}} \cdot W(x) - (W*f(c-\cdot))(x)+ \int_0^{x} W(x-y)\omega(y)\widehat{L}^{(\omega)}f(c-y)\,dy,
\]
which along with \eqref{eqn:Ww} and \eqref{eqn:Ww:xy}, for $\mathcal{R}^{(\omega)}$ defined in (\ref{mathcalR}), we get:
\begin{equation}
\widehat{L}^{(\omega)}f(c-x)=\ c_{\widehat{L}} \cdot \mathcal{W}^{(\omega)}(x)- (\mathcal{R}^{(\omega)}f(c-\cdot))(x), \qquad x\in[0,c].\label{eqn:hL:inproof:1}
\end{equation}
Substituting \eqref{eqn:hL:inproof:1} into \eqref{eqn:hL:inproof:2} gives:
\begin{eqnarray*}
\lefteqn{\left(W'(c)+ \int_{0-}^{c} \omega(c-y) \mathcal{W}^{(\omega)}(c-y) W(dy)\right) c_{\widehat{L}}}\\
&&=\ \int_{0-}^{c} \left(f(y)+ \omega(c-y) (\mathcal{R}^{(\omega)}f(c-\cdot))(c-y)\right)\,W(dy)\\
&&=\ f(0)W(0)+ \int_{0}^{c} f(c-z) \left(W'(c-z)+ \int_{0-}^{c}\omega(c-y) \mathcal{W}^{(\omega)}(c-y,z) \,W(dy)\right)\,dz.
\end{eqnarray*}
Now using equations \eqref{eqn:Wf:dif} and \eqref{eqn:Ww:dif} together with the fact that $W(0)=\mathcal{W}^{(\omega)}(0)$ we obtain:
\[
\mathcal{W}^{(\omega)\prime}(c) c_{\widehat{L}}=\mathcal{W}^{(\omega)}(0) f(0)+  \int_0^{c} \frac{\partial}{\partial x}\mathcal{W}^{(\omega)}(c,z) f(c-z)\,dz.
\]
Finally combining the above results, we have 
\begin{align*}
\widehat{L}^{(\omega)}f(x)=&\ f(0) \frac{ \mathcal{W}^{(\omega)}(0)\mathcal{W}^{(\omega)}(c-x)}{\mathcal{W}^{(\omega)\prime}(c)}\\
&\ +\int_0^{c} f(z) \left( \frac{\mathcal{W}^{(\omega)}(c-x)}{\mathcal{W}^{(\omega)\prime}(c)} \frac{\partial}{\partial x}\mathcal{W}^{(\omega)}(c,c-z)- \mathcal{W}^{(\omega)}(c-x,c-z)\right)\,dz,
\end{align*}
which completes the proof.

\halmos

\section*{Acknowledgements}
We are very grateful to Xiaowen Zhou for
suggesting to consider the step function $\omega$ and 
to Lewis Ramsden for many helpful remarks and comments.
The authors would like to
thank the anonymous reviewer for her/his useful suggestions that improved the quality of the paper.

\appendix

\bibliographystyle{imsart-nameyear}

\end{document}